\newcommand{\Z}{\mathbb Z}
\newcommand{\Q}{\mathbb Q}
\newcommand{\C}{\mathbb C}
\newcommand{\R}{\mathbb R}
\newcommand{\Irr}{\operatorname{Irr}}
\newtheorem{theorem}{Theorem}[section]
\newtheorem{corollary}[theorem]{Corollary}
\newtheorem{lemma}[theorem]{Lemma}
\newtheorem{proposition}[theorem]{Proposition}
\theoremstyle{definition}
\newtheorem{remark}[theorem]{Remark}
\newtheorem{maintheorem}{Theorem}
\newtheorem{maincorollary}[maintheorem]{Corollary}
\newenvironment{enumeratei}{\begin{enumerate}[\upshape (a)]}
	{\end{enumerate}}
\def\irr#1{{\rm Irr}(#1)}
\def\cent#1#2{{\bf C}_{#1}(#2)}
\def\syl#1#2{{\rm Syl}_#1(#2)}
\def\oh#1#2{{\bf O}_{#1}(#2)}
\def\zent#1{{\bf Z}(#1)}
\def\gen#1{\langle#1\rangle}
\def\aut#1{{\rm Aut}(#1)}
\def\fit#1{{\bf F}(#1)}
\def\frat#1{{\bf \Phi}(#1)}
\def\irr#1{{\rm Irr}(#1)}
\def\cent#1#2{{\bf C}_{#1}(#2)}
\def\syl#1#2{{\rm Syl}_#1(#2)}
\def\norm#1#2{{\bf N}_{#1}(#2)}
\def\oh#1#2{{\bf O}_{#1}(#2)}
\def\zent#1{{\bf Z}(#1)}
\def\gen#1{\langle#1\rangle}
\def\aut#1{{\rm Aut}(#1)}
\def\fit#1{{\bf F}(#1)}
\def\PSL#1{{\rm PSL}_{2}(#1)}
\def\Z{{\mathbb Z}}
\def\C{{\mathbb C}}
\def\Q{{\mathbb Q}}
\def\irr#1{{\rm Irr}(#1)}
\def\cent#1#2{{\bf C}_{#1}(#2)}
\def\syl#1#2{{\rm Syl}_#1(#2)}
\def\oh#1#2{{\bf O}_{#1}(#2)}
\def\zent#1{{\bf Z}(#1)}
\def\ker#1{{\rm ker}(#1)}
\def\norm#1#2{{\bf N}_{#1}(#2)}
\def \mod#1{\, {\rm mod} \, #1 \, }
\mathchardef\coso="2023
\newcommand{\GEN}[1]{\left\langle #1 \right\rangle}
\newcommand{\ngr}[1]{\Gamma_{\rm{N}}(#1)}
\newcommand{\ngk}[1]{\Gamma_{\rm{GK}}(#1)}
\begin{document}
	
	\title{The N-prime graph and the Subgroup Isomorphism Problem}
	
\begin{abstract} We introduce a directed graph related to a group $G$, which we call the \emph{N-prime graph} $\ngr G$ of $G$ and which is a refinement of the classical Gruenberg-Kegel graph. The vertices of $\ngr G$ are the primes $p$ such that $G$ has an element of order $p$, and, for distinct vertices $p$ and $q$, the arc $q\rightarrow p$ is in the graph if and only if $G$ has a subgroup of order $p$ whose normalizer in $G$ has an element of order $q$. Generalizing some known results about the Gruenberg-Kegel graph, we prove that the group $V(\Z G)$ of the units with augmentation $1$ in the integral group ring $\Z G$ has the same N-prime graph as $G$ if $G$ is a finite solvable group, and we reduce to almost simple groups the problem of whether $\ngr{V(\Z G)}=\ngr G$ holds for any finite group $G$. We also prove that $\ngr{V(\Z G)}=\ngr G$ if $G$ is almost simple with socle either an alternating group, or $\PSL{r^f}$ with $r$ prime and $f\le 2$. Finally, for $G$ solvable we obtain some stronger results which give a contribution to the Subgroup Isomorphism Problem. More precisely, we prove that if $V(\Z G)$ contains a Frobenius subgroup $T$ with kernel of prime order and complement of prime-power order, then $G$ contains a subgroup isomorphic to $T$.
\end{abstract}

\author[E. Pacifici]{Emanuele Pacifici}
\address{Emanuele Pacifici, Dipartimento di Matematica e Informatica U. Dini,\newline
Universit\`a degli Studi di Firenze, viale Morgagni 67/a,
50134 Firenze, Italy.}
\email{emanuele.pacifici@unifi.it}

\author[\'A. del R\'io]{\'Angel del R\'io}
\address{\'Angel del R\'io, Departamento de Matemáticas,
Universidad de Murcia, Campus de Espinardo, 30100 Murcia, Spain.}
\email{adelrio@um.es}

\author[M. Vergani]{Marco Vergani}
\address{Marco Vergani, Dipartimento di Matematica e Informatica U. Dini,\newline
Universit\`a degli Studi di Firenze, viale Morgagni 67/a,
50134 Firenze, Italy.}
\email{marco.vergani@unifi.it}

\thanks{The first and third authors are partially supported by INdAM-GNSAGA, and by the European Union-Next Generation EU, Missione 4 Componente 1, CUP B53D23009410006, PRIN 2022 2022PSTWLB - Group Theory and Applications. The second author is partially supported by Grant PID2024-155576NB-I00 funded by MICIU/AEI/10.13039/501100011033/FEDER, UE, and by Grant 22004/PI/22 funded by Fundación Séneca of Región de Murcia, Spain.}    

\keywords{Unit group; Group ring; Prime graph.}
\subjclass[2020]{Primary 16S34. Secondary 16U60, 20C05.}

\maketitle

\section{Introduction}

This paper deals with the following general question concerning finite groups: how much information about a finite group $G$ can be obtained from the ring theoretical properties of its integral group ring $\Z G$? The history of this question began when G. Higman proved in his Ph.D. dissertation that if $G$ is abelian, then every group $H$ such that $\Z G$ and $\Z H$ are isomorphic must be isomorphic to $G$ \cite{Higman1940Thesis,Higman1940Paper}. 
A. Whitcomb extended the latter result to the case when $G$ is metabelian \cite{WhitcombThesis}, and later A. Weiss proved the same property for nilpotent groups \cite{Weiss1991}.
For a while it was believed that this could be true for every finite group but, in 2001,  M. Hertweck discovered two non-isomorphic groups with isomorphic integral group rings \cite{Hertweck2001}. 
Nevertheless, there is still much evidence that significant information about $G$ is encoded in $\Z G$,  and mostly in the group $V(\Z G)$ of units of $\Z G$ of augmentation 1.
A detailed discussion about the interplay between properties of $G$ and $V(\Z G)$ can be found in the survey \cite{MdR2019} and in \cite{JR}.

The \emph{First Zassenhaus Conjecture} (formulated by H.J. Zassenhaus in 1974) is an iconic problem in this research field, which attracted the interest of many authors in the past years; it predicts that every torsion unit of $\Z G$ is conjugate in $\Q G$ to an element of the form $\pm g$ for a suitable $g\in G$. 
This was proved in many special cases (see e.g. \cite{BachleHermanKonovalovMargolisSingh2018, CaicedoMargolisdelRio2013,Hertweck2006,Weiss1991}) but, in 2018, F. Eisele and L. Margolis provided a counterexample \cite{EiseleMargolis18}. 
As a related concept, we recall that the \emph{Gruenberg-Kegel graph} of a group $G$ (also known as the \emph{prime graph} of $G$) is the simple undirected graph $\ngk{G}$ whose vertex set is the set $\pi(G)$ of the prime numbers dividing the order of some torsion element of $G$, and, for distinct vertices $p$ and $q$, the edge $q-p$ is in the graph if and only if $G$ has an element of order $pq$. 
Now, if the First Zassenhaus Conjecture holds for a given group $G$, then clearly $V(\Z G)$ and $G$ have the same Gruenberg-Kegel graph. In view of this, W. Kimmerle proposed in \cite{Kim06} the so-called \emph{Prime Graph Question}: is it true that, for every finite group $G$, the graphs $\ngk{V(\Z G)}$ and $\ngk{G}$ are equal? The Prime Graph Question is usually abbreviated as {\rm (PQ)}, referring to the common use of $p$ and $q$ for two different primes. It can be interpreted as a weaker form of the First Zassenhaus Conjecture, and there is no known counterexample for it.
Kimmerle proved in \cite{Kim06} that the answer is affirmative if $G$ is a solvable group, and in fact the problem has been reduced to almost simple groups by Kimmerle and A. Konovalov in \cite{KK}; since then, several almost simple groups have been investigated from this point of view (see \cite[Theorem~2.6]{MdR2019} for a wide list of results on the Prime Graph Question for almost simple groups; see also \cite{EiseleMargolis25}).

\smallskip
In this paper we introduce a different graph related to a group $G$, which we call the \emph{N-prime graph} $\ngr G$ of $G$. We define it as the simple directed graph whose vertex set  is $\pi(G)$ and, for distinct vertices $p$ and $q$, the arc $q\rightarrow p$ is in the graph if and only if there exists an element $x$ in $G$ of order $p$ such that $\GEN x$ is normalized by an element $y$ in $G$ of order $q$. As observed in Remark~\ref{GK}, the Gruenberg-Kegel graph of $G$ can be obtained from $\ngr G$ replacing every double arrow $q\leftrightarrows p$ by the edge $q-p$ and deleting all single arrows; on the other hand, the converse is not true because it is easy to find examples of groups having the same Gruenberg-Kegel graph but different N-prime graphs. So, this graph in principle encodes more detailed information about the structure of the group $G$, and we can consider the following strengthening of the Prime Graph Question.

We say that \emph{(NPQ) holds} for a finite group $G$ if $\ngr G=\ngr{V(\Z G)}.$

\medskip
\begin{quote}
{\bf{N-Prime Graph Question:}}    
Is it true that (NPQ) holds for every finite group? 
\end{quote}
\medskip 

The first result of this paper is a generalization to (NPQ) of the aforementioned reduction by Kimmerle and Konovalov for {\rm (PQ)}.

\begin{maintheorem}\label{NPQreduction}
Let $G$ be a finite group, and assume that (NPQ) holds for every almost simple homomorphic image of $G$. Then {\rm (NPQ)} holds for $G$. 
\end{maintheorem}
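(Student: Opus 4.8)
Recall that $\pi(V(\Z G))=\pi(G)$, so that $\ngr G$ and $\ngr{V(\Z G)}$ have the same vertex set; since $G\le V(\Z G)$, every arc of $\ngr G$ is an arc of $\ngr{V(\Z G)}$, and hence (NPQ) for $G$ just asserts that every arc of $\ngr{V(\Z G)}$ already lies in $\ngr G$. The first step I would take is an elementary translation: for distinct primes $p,q$ and any finite group $H$, the arc $q\to p$ lies in $\ngr H$ \emph{if and only if} $H$ has a subgroup of order $pq$ with a normal Sylow $p$-subgroup. (If $x$ has order $p$, $y$ has order $q$ and $y$ normalizes $\GEN x$, then $\GEN x\GEN y$ is such a subgroup; conversely, Cauchy's theorem supplies the required $x$ and $y$ inside such a subgroup.) Thus the goal becomes: \emph{if $V(\Z G)$ contains a subgroup of order $pq$ with normal Sylow $p$-subgroup, then so does $G$} --- the precise analogue of what the Prime Graph Question asks (where one tests only cyclic subgroups of order $pq$, equivalently elements of order $pq$), and the plan is to adapt the Kimmerle--Konovalov reduction \cite{KK} for {\rm (PQ)} to these slightly larger witnessing subgroups.

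I would argue by induction on $|G|$. Suppose the statement is false and let $G$ have minimal order among its counterexamples: so (NPQ) holds for every almost simple homomorphic image of $G$, while for some distinct primes $p,q$ the arc $q\to p$ lies in $\ngr{V(\Z G)}\setminus\ngr G$. Fix such an arc, witnessed by a subgroup $T\le V(\Z G)$ with $|T|=pq$, normal Sylow $p$-subgroup $\GEN x$ (so $|x|=p$), and an element $y\in T$ of order $q$. Since the almost simple homomorphic images of any quotient $G/N$ lie among those of $G$, minimality forces (NPQ) to hold for every proper quotient of $G$; moreover $G$ is not almost simple (otherwise (NPQ) would hold for $G$ by hypothesis) and it is not solvable, because (NPQ) holds for all finite solvable groups. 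The first genuine reduction is to prove $\oh{\ell}{G}=1$ for every prime $\ell\notin\{p,q\}$: if $N:=\oh{\ell}{G}\ne 1$, then the kernel of the natural map $V(\Z G)\to V(\Z(G/N))$ contains no nontrivial torsion $\ell'$-element (over the $\ell$-adic integers that kernel is a pro-$\ell$ group, as $N$ is an $\ell$-group), so the $\ell'$-group $T$ embeds into $V(\Z(G/N))$, its image being again a subgroup of order $pq$ with normal Sylow $p$-subgroup. By minimality $\ngr{V(\Z(G/N))}=\ngr{G/N}$, so $G/N$ contains such a subgroup $\overline{H}$; its full preimage $H\ge N$ in $G$ has $|H/N|=|\overline{H}|$ coprime to $|N|$, so by the Schur--Zassenhaus theorem $N$ has a complement $K$ in $H$, and then $K\cong\overline{H}$ has a normal Sylow $p$-subgroup, giving $q\to p\in\ngr K\subseteq\ngr G$ --- a contradiction. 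In particular $\fit{G}=\oh{p}{G}\times\oh{q}{G}$.

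Beyond this point a genuinely new difficulty appears, which I expect to be the main obstacle. In order to reach an almost simple quotient one must also remove the $p$- and $q$-parts of $\fit{G}$ and treat the layer $\lay{G}$, but the clean projection argument no longer suffices, because the witness $T$ can partially collapse modulo a normal subgroup whose order is divisible by $p$ or $q$ --- a subgroup of $\oh{p}{G}$, of $\oh{q}{G}$, or a product of components of $\lay{G}$. For example, if the Sylow $p$-subgroup $\GEN x$ of $T$ maps trivially into $V(\Z(G/\oh{p}{G}))$, then $\GEN x$ is a finite $p$-subgroup of $V(\Z G)$ lying in the kernel of that map, hence --- by Weiss-type results on torsion units modulo normal $p$-subgroups --- conjugate in $V(\Z G)$ into $\oh{p}{G}\le G$; one must then recover, inside $G$ itself, an element of order $q$ normalizing a subgroup of order $p$, through a careful analysis of the normalizers $\norm{V(\Z G)}{\GEN z}$ of cyclic subgroups $\GEN z$ of $V(\Z G)$ (and symmetrically for $\oh{q}{G}$), while the case in which $T$ is nonabelian would be handled via the partial augmentations of torsion units of order $pq$ in $V(\Z G)$. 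What makes this step delicate is precisely the need for statements asserting that torsion units of $V(\Z G)$ in the kernel of the reduction to $V(\Z(G/\oh{p}{G}))$ (respectively $V(\Z(G/\oh{q}{G}))$) are conjugate --- rationally, indeed $p$-adically (respectively $q$-adically) --- into $\oh{p}{G}$ (respectively $\oh{q}{G}$), together with a fine understanding of normalizers of cyclic $p$-subgroups of $V(\Z G)$ and enough control of the generalized Fitting subgroup $\fitg{G}$ to ensure that, once these ingredients are assembled, the descent does terminate at an almost simple quotient of $G$ on which the arc $q\to p$ still holds --- the desired contradiction. By contrast, the reduction killing $\oh{\ell}{G}$ for $\ell\notin\{p,q\}$ and the solvable base case are routine.
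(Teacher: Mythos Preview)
Your proposal is not a complete proof: you carry out only the reduction that kills $\oh{\ell}{G}$ for $\ell\notin\{p,q\}$, and then explicitly flag the remaining steps as ``the main obstacle'', offering only a speculative sketch (Weiss-type conjugacy, normalizer analysis in $V(\Z G)$, partial augmentations). That sketch is not substantiated, and in fact none of that machinery is needed. There is also a circularity: you invoke ``(NPQ) holds for all finite solvable groups'', but in the paper that is Corollary~\ref{NPQsolvable}, derived \emph{from} the theorem you are trying to prove.

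The key idea you are missing is to first exploit the hypothesis to upgrade (PQ). Since (NPQ) for an almost simple quotient $X$ implies $\ngk{X}=\ngk{V(\Z X)}$ (Remark~\ref{GK}), the Kimmerle--Konovalov reduction \cite{KK} gives $\ngk{G}=\ngk{V(\Z G)}$. Hence if $q\to p$ lies in $\ngr{V(\Z G)}\setminus\ngr{G}$, the reverse arc $p\to q$ cannot be in $\ngr{V(\Z G)}$: the witness $T$ is forced to be the \emph{Frobenius} group $C_p\rtimes C_q$, and $q\mid p-1$. This is decisive, because the only normal subgroups of $T$ are $1$, $\GEN u$, $T$; so for \emph{any} minimal normal subgroup $M$ with $p\nmid|M|$ (not only $\ell$-groups with $\ell\ne p,q$) one has $T\cap\ker\varphi_M=1$, and the arc survives in $\ngr{V(\Z(G/M))}=\ngr{G/M}$ by minimality. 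The lift back to $\ngr G$ is purely group-theoretic (Proposition~\ref{subgraph}), no Schur--Zassenhaus needed. This already forces $\fit G$ to be a $p$-group; if $\fit G\ne 1$, then any element of order $q$ in $G$ either centralizes $\fit G$ (producing an element of order $pq$, impossible) or, via Lemma~\ref{ActionNT} and $q\mid p-1$, normalizes a subgroup of order $p$---either way the arc $q\to p$ lies in $\ngr G$. So $\fit G=1$, and the solvable case drops out here for free.

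From this point the argument is entirely finite group theory, with no further input from $V(\Z G)$. Every minimal normal subgroup $M=S_1\times\cdots\times S_t$ is nonabelian with $p\mid|S_i|$; one shows $q\nmid|M|$ (else $G$ is almost simple or one manufactures an element of order $pq$), then $q\nmid|\norm{G}{S_i}|$ (else a $q$-element normalizes a Sylow $p$-subgroup of $S_i$ and Lemma~\ref{ActionNT} applies again), so $G/\bigcap_i\norm{G}{S_i}$ embeds in $\sym t$ and any $q$-element permutes the $S_i$ in a $q$-cycle; a standard diagonal construction then produces an element of order $pq$ in $G$, the final contradiction. Your proposed route through $p$-adic conjugacy of torsion units and normalizers inside $V(\Z G)$ is not required and, as stated, is not a proof.
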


The corresponding generalization of the main result in \cite{Kim06} is then derived at once.

\begin{maincorollary}\label{NPQsolvable}
Let $G$ be a finite solvable group. Then (NPQ) holds for $G$.
\end{maincorollary}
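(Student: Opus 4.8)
The plan is to deduce Corollary~\ref{NPQsolvable} directly from Theorem~\ref{NPQreduction}. The key observation is that a finite solvable group has no almost simple homomorphic images: every quotient of a solvable group is again solvable, whereas an almost simple group $A$ (one with $S\le A\le\aut S$ for some non-abelian simple group $S$) contains the non-abelian simple subgroup $S$ and is therefore non-solvable. Hence, when $G$ is solvable, the hypothesis of Theorem~\ref{NPQreduction} --- that (NPQ) holds for every almost simple homomorphic image of $G$ --- is satisfied vacuously.

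Thus the argument is a single step: let $G$ be a finite solvable group, observe that the family of almost simple quotients of $G$ is empty, invoke Theorem~\ref{NPQreduction}, and conclude that (NPQ) holds for $G$, i.e. $\ngr{G}=\ngr{V(\Z G)}$. There is no genuine obstacle here beyond having Theorem~\ref{NPQreduction} available; the only thing to check is the elementary remark above relating solvable groups and almost simple groups. (Alternatively one could reprove the statement from scratch, adapting Kimmerle's treatment of {\rm (PQ)} for solvable groups in \cite{Kim06} so as to track arcs $q\to p$ rather than edges $q-p$, but routing through Theorem~\ref{NPQreduction} makes that unnecessary.)
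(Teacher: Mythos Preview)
Your proposal is correct and matches the paper's approach exactly: the authors state that the corollary is ``derived at once'' from Theorem~\ref{NPQreduction}, and in the proof of the combined result (Theorem~\ref{NPQ}) they note that the solvable case is already settled once the argument forces every minimal normal subgroup of a minimal counterexample to be nonabelian. The vacuity argument you give --- solvable groups have no almost simple quotients --- is precisely the intended deduction.
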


In view of Theorem~\ref{NPQreduction}, we explore the validity of (NPQ) for some families of almost simple groups. In particular, we prove that (NPQ) holds for rational groups, and for almost simple groups whose socle is an alternating group or a group of the form ${\rm{PSL}}_2(r^f)$ where $r$ is a prime and $f\in\{1,2\}$ (see Theorem~\ref{rational}, Theorem~\ref{alternating} and Theorem~\ref{PSL}, respectively). As a consequence, we obtain the following.

\begin{maintheorem} \label{NPQAnPSL} Let $G$ be a finite group. Assume that every almost simple homomorphic image of $G$ is a rational group, or its socle is an alternating group, or its socle is isomorphic to ${\rm{PSL}}_2(r^f)$ for a prime $r$ and $f\in\{1,2\}$. Then {\rm (NPQ)} holds for $G$.
\end{maintheorem}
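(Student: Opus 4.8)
The plan is to obtain Theorem~\ref{NPQAnPSL} as a direct consequence of the reduction provided by Theorem~\ref{NPQreduction}, combined with the three results already announced in the introduction: that (NPQ) holds for rational groups (Theorem~\ref{rational}), for almost simple groups whose socle is an alternating group (Theorem~\ref{alternating}), and for almost simple groups whose socle is $\PSL{r^f}$ with $r$ prime and $f\in\{1,2\}$ (Theorem~\ref{PSL}). In other words, the present statement is the bookkeeping that glues these results together, and the real mathematical content lies elsewhere.

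Concretely, I would argue as follows. Let $A$ be an almost simple homomorphic image of $G$, and denote by $S$ its socle. By hypothesis, either $A$ is a rational group, or $S$ is an alternating group, or $S\cong\PSL{r^f}$ for some prime $r$ and some $f\in\{1,2\}$. In the first case (NPQ) holds for $A$ by Theorem~\ref{rational}; in the second, by Theorem~\ref{alternating}; in the third, by Theorem~\ref{PSL}. Hence (NPQ) holds for $A$ in all cases. Since $A$ was an arbitrary almost simple homomorphic image of $G$ (and if $G$ has no such image, the situation is covered by Corollary~\ref{NPQsolvable} or, more directly, Theorem~\ref{NPQreduction} applies vacuously), the hypothesis of Theorem~\ref{NPQreduction} is fulfilled, and that theorem yields that (NPQ) holds for $G$.

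A small point worth flagging is that the three classes appearing in the hypothesis overlap — for instance $\alt{5}\cong\PSL{4}\cong\PSL{5}$ and $\alt{6}\cong\PSL{9}$, and some alternating groups admit further exceptional isomorphisms — but this is entirely harmless: for each almost simple quotient $A$ we only need (NPQ) to follow from at least one of the cited theorems, and the case distinction above is not required to be exclusive. Accordingly, I do not expect any genuine obstacle in proving this statement; the difficulty is entirely absorbed into Theorem~\ref{NPQreduction} and into the three component theorems, which are established separately in the paper.
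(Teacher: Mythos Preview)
Your proposal is correct and matches the paper's approach exactly: the paper presents Theorem~\ref{NPQAnPSL} as an immediate consequence of Theorem~\ref{NPQreduction} combined with Theorem~\ref{rational}, Theorem~\ref{alternating}, and Theorem~\ref{PSL}, with no separate proof given. Your remarks about overlaps and the vacuous case are fine but not needed.
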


Another interesting problem in the present context, somewhat ``intermediate" between the First Zassenhaus Conjecture and the Prime Graph Question, is the \emph{Spectrum Problem}: is it true that, whenever $V(\Z G)$ has an element of a given order $n$, then $G$ has an element of order $n$ as well? It has been proved by Hertweck in \cite{HertweckOrdersSolvable} that the answer is affirmative if the group $G$ is solvable, and there is no known counterexample.  By Corollary~4.1 in \cite{CohnLivingstone1965}, we know that if $V(\Z G)$ has a cyclic subgroup $T$ of prime-power order, then also $G$ has a subgroup isomorphic to $T$. Note that the Spectrum Problem (respectively, the Prime Graph Question) can be restated as follows: is it true that if $V(\Z G)$ has a cyclic subgroup $C_n$ of order $n$ (respectively, of order $pq$), then so does $G$? So these questions ask for the isomorphism types of cyclic subgroups of $G$ and $V(\Z G)$. On the other hand, (NPQ) deals with subgroups of type $C_p\rtimes C_q$. We can frame all these questions in the following general context.

\medskip
\begin{quote} {\bf{Subgroup Isomorphism Problem (SIP):}}
Let $T$ be a finite group. Is it true that if $G$ is a finite group such that $V(\Z G)$ has a subgroup isomorphic to $T$, then also $G$ has a subgroup isomorphic to $T$?
\end{quote}

\medskip

As non-isomorphic groups may have isomorphic integral group rings, of course the (SIP) has in general a negative solution. However, the previous discussion shows that under some conditions the answer is positive. Besides the results above on the case where $T$ is cyclic, Kimmerle observed that the (SIP) has a positive answer for $T=C_2\times C_2$ and Hertweck extended that to $T=C_p\times C_p$ where $p$ is a prime \cite{HertweckCpCp}. Other results for the (SIP) can be found in~\cite{DokuchaevJuriaans1996,Kim15,M2017}. 

Care should be taken in considering {\rm (NPQ)} as part of the (SIP). Let $T$ be the Frobenius group $C_p\rtimes C_q$. Suppose that (NPQ) holds for $G$ and $V(\Z G)$ has a subgroup isomorphic to $T$. If, moreover, $G$ does not have elements of order $pq$, then $G$ has a subgroup isomorphic to $T$. However, if $G$ has elements of order $pq$, then the existence of a subgroup of $G$ isomorphic to $T$ is not granted by (NPQ).

Our next result shows that such a subgroup actually exists, under the assumption that $G$ is solvable. More generally, we prove the following theorem which is a contribution to the (SIP) for $T$ a Frobenius group of type $C_p\rtimes C_{q^k}$. 

\begin{maintheorem}\label{SIPsolvable}
	Let $G$ be a finite solvable group and let $T$ be a Frobenius group of the form $C_p\rtimes C_{q^k}$ with $p$ and $q$ primes. If $V(\Z G)$ has a subgroup isomorphic to $T$, then so does~$G$.
\end{maintheorem}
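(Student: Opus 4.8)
The plan is to induct on $|G|$, combining Corollary~\ref{NPQsolvable} ((NPQ) for solvable groups), the Cohn--Livingstone result on cyclic subgroups of prime-power order \cite{CohnLivingstone1965}, and Hertweck's spectrum theorem for solvable groups \cite{HertweckOrdersSolvable}, together with a structural reduction. Write $T=\GEN x\rt\GEN y$ with $|x|=p$ and $|y|=q^{k}$; since $T$ is Frobenius, $p\neq q$, the action of $\GEN y$ on $\GEN x$ is faithful (so $q^{k}\mid p-1$), and one checks readily that $\GEN x=\fit T$ is the unique minimal normal subgroup of $T$, $\oh pT=\GEN x$, and $\oh qT=1$. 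As $\GEN x$ is normalized inside $V(\Z G)$ by an element of order $q$, the arc $q\to p$ lies in $\ngr{V(\Z G)}$, which by Corollary~\ref{NPQsolvable} equals $\ngr G$; hence $G$ has a subgroup of order $p$ normalized by an element of order $q$, and by \cite{CohnLivingstone1965} it has an element of order $q^{k}$. Next, the usual reduction: if $1\neq N\nor G$ with $\gcd(|N|,pq)=1$, then $\gcd(|N|,|T|)=1$, the natural map $V(\Z G)\to V(\Z(G/N))$ is injective on $T$ (a torsion unit and its image in $V(\Z(G/N))$ have the same $\pi(N)'$-part, a standard fact), so $G/N$ contains a copy of $T$ by induction; its full preimage in $G$ is an extension of $N$ by $T$ of coprime orders, which splits by Schur--Zassenhaus. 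Hence we may assume $\oh{\{p,q\}'}G=1$, whence $\fit G=\oh pG\times\oh qG$ and $\cent G{\fit G}\le\fit G$.

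Now let $N$ be a minimal normal subgroup of $G$, an elementary abelian $r$-group with $r\in\{p,q\}$, and set $\overline G=G/N$. The kernel of $T\to V(\Z\overline G)$ is, by the same ``same $\pi(N)'$-part'' fact, a normal $r$-subgroup of $T$; by the previous paragraph it is trivial when $r=q$, and either trivial or equal to $\GEN x$ when $r=p$. In the cases where it is trivial, induction yields a copy $\overline T$ of $T$ inside $\overline G$, and the problem becomes a \emph{lifting question}: inside the full preimage $\tilde T\le G$ of $\overline T$ --- an extension of the $r$-group $N$ by $T$ --- locate a subgroup of order $p$ together with a cyclic $q$-subgroup of order \emph{exactly} $q^{k}$ acting faithfully on it. The $p$-part of this is a coprime-action statement: the preimage of the Frobenius kernel is a $p$-group of order $p\,|N|$ on which a $q^{k}$-element acts nontrivially, and one extracts the required $C_p$ as a suitable minimal invariant subgroup --- here it is crucial that the kernel of $T$ has \emph{prime} order, so that a single one-dimensional invariant piece suffices and no larger irreducible module can get in the way. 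The $q$-part is the delicate point: one must realize the exact order $q^{k}$ with faithful action, and a naive complement argument can fail (for instance $G\cong C_p\rt C_{q^{k+1}}$ with non-faithful action is a group having \emph{no} Frobenius subgroup $C_p\rt C_{q^{k}}$, yet $G$ still has elements of orders $p$ and $q^{k}$). Excluding such near-counterexamples is where the hypothesis that the complement has prime-power order is used, together with information about $T\le V(\Z G)$ that goes beyond the N-prime graph --- the Cohn--Livingstone bound, Hertweck's spectrum theorem, and HeLP-type constraints on the partial augmentations of the prime-order unit $x$ (and of $xy$, which is again of order $q^{k}$). The remaining subcase, $r=p$ with $\GEN x$ in the kernel of $T\to V(\Z\overline G)$, is treated by choosing $N$ inside $\oh pG$ so as to avoid it, or failing that by a direct analysis using $\cent G{\fit G}\le\fit G$ and the action on $\oh pG$ of the order-$p$ element already found.

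The step I expect to be the main obstacle is precisely this lifting in the non-coprime case: showing that the N-prime graph data, Cohn--Livingstone, and the Frobenius shape of $T$ inside $V(\Z G)$ \emph{force}, rather than merely leave consistent, the presence in $G$ of a group of order $q^{k}$ acting faithfully on a subgroup of order $p$. This is exactly where both hypotheses on $T$ --- kernel of prime order, complement of prime-power order --- must be used in full, and I would expect the technical core to be a HeLP-style argument with partial augmentations and ordinary/modular character values.
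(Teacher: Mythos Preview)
Your skeleton is correct and your diagnosis of where the difficulty lies --- the lifting through a minimal normal $q$-subgroup --- is exactly right. But the paper's resolution of that step is \emph{not} a HeLP-style character computation, and your outline as written does not contain the idea that makes it work.

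The paper proves a strictly stronger statement by carrying extra data through the induction: it shows that one can find $a,b\in G$ with $\GEN{a,b}\cong T$ \emph{and} $\varepsilon_b(v)\neq 0$, where $v$ is the given order-$q^k$ unit. This strengthening is what makes the inductive step go through. From $\varepsilon_{\varphi(y)}(\varphi(v))\neq 0$ in $G/N$ one extracts $b\in G$ with $bN$ conjugate to $\varphi(y)$ and $\varepsilon_b(v)\neq 0$; Hertweck's theorem on partial augmentations \cite[Theorem~2.3]{Hertweck2007} then forces $|b|=q^k$ exactly, and since $|bN|=q^k$ as well, one gets $\GEN{b}\cap N=1$. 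Your ``near-counterexample'' $C_p\rtimes C_{q^{k+1}}$ is disposed of precisely here: in that group no element of order $q^k$ has trivial intersection with the central $C_q$, so the Hertweck constraint already rules out the existence of $T$ inside $V(\Z G)$. Without tracking the partial-augmentation condition you cannot see this, and your proposed lift of ``some'' element of order $q^k$ from $G/N$ would indeed fail.

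The second missing piece is the lifting lemma itself (Lemma~\ref{christmaseve} in the paper): given a finite group $H$ with a normal elementary abelian $q$-subgroup $N$, $H/N$ a Frobenius group $C_p\rtimes C_{q^k}$, and $b\in H$ of order $q^k$ with $\GEN{b}\cap N=1$, then $b$ normalizes some Sylow $p$-subgroup of $H$. This is proved by a pure counting argument --- reducing to a $2$-Frobenius group, analysing the Jordan form of the $\GEN{b}$-action on $N$, and showing that the number of order-$q^k$ elements with $\GEN{b}\cap N=1$ equals the number of those normalizing a Sylow $p$-subgroup. No characters, no group-ring input at all. The terminal case $\fit G$ a $p$-group is then handled directly: choose $b$ of order $q^k$ with $\varepsilon_b(v)\neq 0$ (Hertweck again), note $b$ acts faithfully on $\fit G$ since $\cent G{\fit G}\le \fit G$, and apply Lemma~\ref{ActionNT}.

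So: right architecture, right identification of the obstacle, but the two load-bearing ideas --- strengthen the induction to carry $\varepsilon_b(v)\neq 0$, and prove a group-theoretic normalizer-counting lemma for the $q$-lift --- are absent from your proposal, and the HeLP machinery you anticipate is not what the paper uses.
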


As already mentioned, by \cite{HertweckOrdersSolvable}, if $G$ is solvable and $V(\Z G)$ has a subgroup isomorphic to $C_p\times C_{q^k}$, then so does $G$. This, together with the previous result, solves the (SIP) for subgroups of the form $C_p\rtimes C_{q^k}$ in the two extreme situations when the action of $C_{q^k}$ on $C_p$ is trivial or faithful. So, we can consider the following.

\medskip
\begin{quote}{\bf{Question:}} Let $G$ be a finite solvable group, and assume that $V(\Z G)$ has a subgroup $T$ of the form $C_{p^h}\rtimes C_{q^k}$, where $p$ and $q$ are different primes. Is it true that $G$ has a subgroup isomorphic to $T$?
\end{quote} 
\medskip

The last result of this paper yields an affirmative answer under the assumption that the derived subgroup of $G$ is cyclic.

\begin{maintheorem}\label{SIPmetacyclic}
	Let $G$ be a finite group such that $G'$ is cyclic, and let $T$ be a group of the form $C_{p^h}\rtimes C_{q^k}$ with $p$ and $q$ different primes. If $V(\Z G)$ has a subgroup isomorphic to $T$, then so does $G$.
\end{maintheorem}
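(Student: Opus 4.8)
The plan is to reduce Theorem~\ref{SIPmetacyclic} to the solvable case already handled by Theorem~\ref{SIPsolvable} together with Hertweck's theorem on subgroups of the form $C_p\times C_{q^k}$, by exploiting the very strong structural restrictions that the hypothesis $G'$ cyclic imposes on $G$, and dually on $V(\Z G)$. First I would record the relevant group-theoretic input: if $G'$ is cyclic then $G$ is metacyclic-by-(something small) and, in particular, $G$ is supersolvable, so $G$ is solvable; thus Theorem~\ref{SIPsolvable} already applies to $G$. The point is therefore not solvability \emph{per se} but the finer claim that when the subgroup $T=C_{p^h}\rtimes C_{q^k}$ of $V(\Z G)$ has a \emph{non-cyclic} kernel part (i.e. $h\ge 2$), $G$ still contains a copy of $T$. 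My strategy is to analyze the Sylow structure of $G$ forced by $G'$ cyclic: for each prime $\ell$, either a Sylow $\ell$-subgroup of $G$ is cyclic, or it is abelian of a very restricted type, because $G/G'$ is abelian and $G'$ is cyclic pins down $G$ as an extension of a cyclic group by an abelian group.

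Next I would pass to $V(\Z G)$. The key is that ring-theoretic invariants of $\Z G$ that are visible on $G$ transfer to $V(\Z G)$: in particular, by a theorem of Bass–Roggenkamp or by the work on the Spectrum Problem, the torsion subgroups of $V(\Z G)$ that one can realize are controlled by $p$-subgroups of $G$ via the Cohn–Livingstone result and its refinements. Concretely, if $T=C_{p^h}\rtimes C_{q^k}\le V(\Z G)$, then its Sylow $p$-subgroup $C_{p^h}$ is a cyclic $p$-subgroup of $V(\Z G)$, hence by \cite{CohnLivingstone1965} $G$ has a subgroup $\cong C_{p^h}$; similarly $G$ has a cyclic subgroup of order $q^k$. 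The work is to make the \emph{semidirect product} appear. Here I would use that $T$ is metacyclic with cyclic derived subgroup, restrict attention to the relevant primes $p,q$, and reduce $G$ to a $\{p,q\}$-subgroup-controlled situation: using $G'$ cyclic one shows the Hall $\{p,q\}$-subgroups of $G$ are metacyclic, and then a direct classification of metacyclic $\{p,q\}$-groups together with the fact that $V(\Z H)$ for such small $H$ is well understood (via Hertweck's and Kimmerle's results, or by explicit computation of $V(\Z H)$ when $H$ is metacyclic) forces the embedding.

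I would carry out the steps in the following order. Step 1: Structural lemma — if $G'$ is cyclic, then for every pair of distinct primes $p,q$ a Hall $\{p,q\}$-subgroup $H$ of $G$ exists and is metacyclic; moreover $H'$ is cyclic. Step 2: Reduce the problem to $H$ in place of $G$ — show that if $V(\Z G)$ has a subgroup $\cong T=C_{p^h}\rtimes C_{q^k}$ then so does $V(\Z H)$ for an appropriate Hall $\{p,q\}$-subgroup $H$; this uses that torsion units of $q$-power and $p$-power order live in conjugates (over $\Q G$, or after a suitable $p$-modular/$q$-modular reduction) of $p$- and $q$-subgroups of $G$, an argument of the same flavor as the proof of Theorem~\ref{SIPsolvable}. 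Step 3: For metacyclic $H$ with $H'$ cyclic, classify the possible isomorphism types of $T$ that embed in $V(\Z H)$, and show each already embeds in $H$; this is where I would either invoke known descriptions of $V(\Z H)$ for metacyclic $H$ or argue via the N-prime graph (Corollary~\ref{NPQsolvable}) to pin down the action, then upgrade from ``graph-level'' information to an actual subgroup as in the passage from (NPQ) to (SIP) discussed before Theorem~\ref{SIPsolvable}.

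The main obstacle I expect is Step 3: knowing that $V(\Z G)$ and $G$ have the same N-prime graph (so the relevant arcs $q\to p$ and $p\to q$ are present, giving both a $C_p\rtimes C_q$ and the orders $p^h,q^k$ individually) does \emph{not} immediately produce the precise semidirect product $C_{p^h}\rtimes C_{q^k}$ with the correct action of the full $C_{q^k}$ on $C_{p^h}$; bridging that gap is exactly the content of the theorem. The hypothesis $G'$ cyclic is what makes this tractable, because it forces the action of any $q$-element on any normal $p$-subgroup to factor through a cyclic quotient of very bounded order, so there are only finitely many metacyclic $\{p,q\}$-configurations to rule in or out, and for each of them $V(\Z H)$ is either already known in the literature or small enough to control directly — but assembling this case analysis cleanly, uniformly in $h$ and $k$, is the delicate part of the argument.
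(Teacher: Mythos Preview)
Your plan has a genuine gap at Step~2, and it bypasses the tool that actually makes the hypothesis ``$G'$ cyclic'' useful.

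First, Step~2 as stated cannot work. There is no ring homomorphism $\Z G\to \Z H$ for a subgroup $H\le G$, and hence no way to push a subgroup of $V(\Z G)$ into $V(\Z H)$. The functorial maps go the wrong way: one has $\Z H\hookrightarrow \Z G$ and, for \emph{normal} $N\trianglelefteq G$, the projection $\Z G\to \Z(G/N)$. Even granting that each torsion unit is individually conjugate into $G$ (which is already the First Zassenhaus Conjecture), this does not let you conjugate the whole group $T$ simultaneously into a fixed Hall $\{p,q\}$-subgroup; that is the content of the much stronger (and in general false) higher Zassenhaus conjectures. So the reduction to a metacyclic $\{p,q\}$-group $H$ is not available, and with it Step~3 loses its input.

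Second, you are missing the key ingredient that the hypothesis is designed to unlock: by \cite{CaicedoMargolisdelRio2013}, the First Zassenhaus Conjecture holds for every group with cyclic derived subgroup. The paper's argument exploits this directly. It first reduces (via \emph{quotients}, not Hall subgroups) to the case where $G'$ is a $p$-group, then uses ZC1 to replace $v$ by a conjugate $b\in G$ and to control where commutators land (an element of $V'$ conjugate in $\Q G$ to some $g\in G$ forces $g\in G'$, cf.\ Remark~\ref{ExponentG'}). From there the proof splits according to whether $\zent T$ is trivial or not, and in each case a short computation inside $G$ produces $a\in G'$ and $b\in G$ with $\langle a,b\rangle\cong T$. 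Neither Theorem~\ref{SIPsolvable} (which only treats kernels of prime order with faithful action) nor a Hall-subgroup reduction plays any role.
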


\section{Preliminary notation}

In this brief preliminary section we introduce some notation and concepts that will come into play. 

For a finite group $G$ and a commutative ring $R$, we denote by $V(R G)$ the group of units having augmentation $1$ in the group ring $R G$. 
Note that, if $M$ is a normal subgroup of $G$, then the natural projection of $G$ onto $G/M$ can be extended by linearity to a ring homomorphism $\varphi_M:R G\rightarrow R(G/M)$, and the restriction of $\varphi_M$ to $V(RG)$ yields a group homomorphism (that we will still denote by $\varphi_M$) to $V(R(G/M))$. We will refer to this $\varphi_M$ as to the ``natural homomorphism'' associated with $M$ (both as a ring homomorphism on $RG$ and as a group homomorphism on $V(RG)$). The subscript $M$ will be usually dropped when the context yields no ambiguity.

We also recall that, given an element $v=\sum_{g\in G}r_g g\;$ in $RG$ (where the coefficients $r_g$ are in $R$), the \emph{partial augmentation} of $v$ with respect to the element $b\in G$ is $$\varepsilon_b(v)=\sum_{g\sim b}r_g\;,$$ where we use the notation $\sim$ for conjugation in $G$ (clearly, $\varepsilon_b(v)$ depends on the conjugacy class of $b$ in $G$ rather than on $b$ itself). Observe that $\varepsilon_b:RG\rightarrow R$ is an $R$-linear map. Moreover, as easily checked, we have $$[RG,RG]=\{v\in RG\mid\varepsilon_g(v)=0\;\;{\text{ for every }}g\in G\}$$ (see \cite[(41.1), page 237]{Seh93}), which implies that $\varepsilon_b(u)=\varepsilon_b(v)$ if $u$ and $v$ are conjugate in $RG$. 

Note that, if $p$ and $q$ are distinct primes, then a group of the form $C_p\rtimes C_q$ is either cyclic or a Frobenius group.

Finally, recall that a finite group $G$ is called a \emph{$2$-Frobenius group} if it has normal subgroups $F$ and $K$ such that $K$ is a Frobenius group with kernel $F$, and $G/F$ is a Frobenius group with kernel $K/F$.

\section{The N-Prime Graph Question: Theorem~\ref{NPQreduction}}

We begin our study of the N-prime graph, that was defined in the introduction, and we first clarify its relationship with the Gruenberg-Kegel graph in the following easy observation.

\begin{remark} \label{GK}
 We observe that the Gruenberg-Kegel graph of a group $G$ has the same vertex set as the N-prime graph of $G$, and its edges can be deduced from those of the N-prime graph as follows: $\ngk G$ has an edge joining two vertices $p$ and $q$ if and only if both $p\rightarrow q$ and $q\rightarrow p$ are arcs of $\ngr G$. Indeed, it is clear from the definitions that if $p-q$ is an edge of $\ngk{G}$, then both $q\rightarrow p$ and $p\rightarrow q$ are arcs of $\ngr G$. On the other hand, if $p$ and $q$ are not adjacent in $\ngk G$ but $\ngr G$ contains the arc $q\rightarrow p$, then $q$ divides $p-1$, thus clearly $\ngr G$ cannot contain the arc $p\rightarrow q$.
 
 An alternative way to describe $\ngk{G}$ from $\ngr G$ is as the undirected graph with the same vertices as $\ngr G$ and edges $p-q$ if and only if $p>q$ and $p\to q$ is an arc of $\ngr G$.

However, in general $\ngr G$ cannot be derived from $\ngk G$: for example, the groups ${\rm{PSL}}_2(7)$ and ${\rm{PSL}}_2(8)$ have the same Gruenberg-Kegel graph $(2\quad 3\quad 7)$, but $\ngr{{\rm{PSL}}_2(7)}$ is $(2\rightarrow 3\rightarrow7)$ whereas $\ngr{{\rm{PSL}}_2(8)}$ is $(7\leftarrow 2\rightarrow 3)$. For an example involving solvable groups, we can consider the 2-Frobenius group $(C_3^6\rtimes C_7)\rtimes C_3$ and the Frobenius group $C_3^6\rtimes C_7$: for both of them the Gruenberg-Kegel graph is disconnected, and so is the N-prime graph of the latter, but the N-prime graph of the former is $(3\rightarrow 7)$.
\end{remark}

It is straightforward that, if $H$ is a subgroup of the finite group $G$, then $\ngr H$ is a subgraph of $\ngr G$, so the graph $\ngr G$ is well behaved with respect to subgroups. We will show next that $\ngr G$ behaves well also with respect to factor groups: if $M$ is a normal subgroup of $G$ then $\ngr{G/M}$ is a subgraph of $\ngr G$. To this end, we introduce a lemma that will be useful also for other results in this paper.

\begin{lemma}\label{ActionNT}
Let $p$ and $q$ be primes, and let $k$ be a positive integer such that $q^k$ divides $p-1$.
Let $P$ be a finite $p$-group and let $\beta$ be an automorphism of order $q^k$ of $P$. Then there exists $a\in P$ such that $|a|=p$ and $\beta$ restricts to an automorphism of order $q^k$ of $\GEN{a}$. 
\end{lemma}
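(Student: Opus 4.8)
Since $q$ is a prime dividing $p-1$, necessarily $q\neq p$ (so $p$ is odd), whence $|\beta|=q^k$ is coprime to $|P|$ and we are in a coprime-action situation. The plan is to argue by induction on $|P|$, at each step pushing the problem down to the Frattini quotient $\overline P=P/\frat P$ --- an elementary abelian $p$-group, i.e.\ an $\F_p$-vector space --- and solving it there by a short linear-algebra argument, and then lifting the resulting subgroup back up.

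The linear-algebra core is the following: if $V\neq 0$ is a finite-dimensional $\F_p$-vector space and $g\in\mathrm{GL}(V)$ has order $q^k$, then $g$ admits a nonzero eigenvector $v$ whose eigenvalue $s$ lies in $\F_p^\times$ and has multiplicative order $q^k$; the line $\GEN v$ is then a $g$-invariant subgroup of order $p$ on which $g$ acts with order $q^k$, which is exactly the kind of subgroup we are after. To see this, observe that the minimal polynomial of $g$ divides $x^{q^k}-1=(x^{q^{k-1}}-1)\,\Phi_{q^k}(x)$ but not $x^{q^{k-1}}-1$, so it is divisible by some irreducible factor over $\F_p$ of the cyclotomic polynomial $\Phi_{q^k}$; here is where the hypothesis $q^k\mid p-1$ enters, as it forces $\F_p$ to contain all $q^k$-th roots of unity, so $\Phi_{q^k}$ splits into distinct linear factors $x-s$ with each $s$ of multiplicative order $q^k$, and any such $s$ is therefore an eigenvalue of $g$.

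Now to the induction on $|P|$. The kernel of the natural homomorphism $\aut P\to\aut{\overline P}$ is a $p$-group (a well-known fact about coprime action and the Frattini subgroup), hence $\beta$ acts on $\overline P$ with order exactly $q^k$; applying the linear-algebra core to $\overline P$ produces a $\beta$-invariant subgroup $\GEN{\overline{a}}\le\overline P$ of order $p$ on which $\beta$ acts with order $q^k$. Let $Q$ be the full preimage of $\GEN{\overline{a}}$ in $P$, so that $Q$ is $\beta$-invariant with $\frat P\le Q\le P$; since $\beta$ acts with order $q^k$ on the quotient $Q/\frat P=\GEN{\overline{a}}$ and, trivially, with order at most $q^k$ on $Q$, the restriction $\beta|_Q$ has order $q^k$. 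If $Q<P$, the inductive hypothesis applied to $(Q,\beta|_Q)$ produces an element $a$ of order $p$ in $Q$, and hence in $P$, with $\GEN a$ $\beta$-invariant and $\beta|_{\GEN a}$ of order $q^k$. If instead $Q=P$, then $\overline P\cong C_p$, so $P$ is cyclic; writing $\beta(c)=c^t$ for a generator $c$ of $P$, the integer $t$ is prime to $p$ of multiplicative order $q^k$ modulo $|P|$, and since the kernel of the reduction $(\Z/|P|\Z)^\times\to(\Z/p\Z)^\times$ is a $p$-group while $\GEN t$ has $q$-power order, $t$ retains order $q^k$ modulo $p$; thus $a:=c^{|P|/p}$ generates the characteristic (hence $\beta$-invariant) subgroup $\Omega_1(P)$, has order $p$, and satisfies $\beta(a)=a^t$, so $\beta|_{\GEN a}$ has order $q^k$. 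This closes the induction, the alternative $Q=P$ also covering the base case $|P|=p$.

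The point demanding care throughout is to guarantee that the order of the induced action of $\beta$ stays equal to $q^k$ as one passes from $P$ to $\overline P$ and from $Q$ to $Q/\frat P$; this is exactly where coprimality of $q^k$ and $p$ is used, through the two ``the kernel is a $p$-group'' observations. I do not expect a genuine obstacle in the lifting step, because the condition $q^k\mid p-1$ is precisely what makes the invariant subspace found in $\overline P$ one-dimensional --- so only a single line must be lifted back to $P$, which is accomplished simply by taking its preimage and re-applying the inductive hypothesis.
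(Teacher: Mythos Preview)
Your proof is correct and follows the same strategy as the paper: induct on $|P|$, pass to the Frattini quotient, locate a one-dimensional $\beta$-invariant subspace on which $\beta$ acts with full order $q^k$, pull back to a proper $\beta$-invariant subgroup, and recurse. The only cosmetic differences are that you find the line via an eigenvalue/minimal-polynomial argument whereas the paper uses complete reducibility and picks a faithful irreducible constituent (necessarily one-dimensional since $q^k\mid p-1$), and you spell out the cyclic base case that the paper dismisses in one line.
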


\begin{proof} We argue by induction on the order of $P$. If the Frattini subgroup $\frat P$ of $P$ has index $p$ in $P$, then $P$ is a cyclic group and our claim follows; thus we may assume that $|P:\frat P|>p$. By coprimality, the action of $\langle \beta\rangle $ on the factor group $P/\frat P$ is faithful (see Theorem~5.3.5 in \cite{Gor}), so we can view $P/\frat P$ as a completely reducible $\langle\beta\rangle$-module over ${\rm{GF}}(p)$ having at least one faithful irreducible constituent. Denoting this constituent by $P_0/\frat P$ and setting $|P_0/\frat P|=p^n$, we have that $\beta$ can be identified with an element of the multiplicative group ${\rm{GF}}(p^n)^\times$ acting on $P_0/\frat P$ as a multiplication map. But since $|\beta|=q^k$ divides $p-1$, $\beta$ is indeed identified with an element of ${\rm{GF}}(p)^\times$, thus $|P_0/\frat P|$ must be $p$. We conclude that $P_0$ is a proper subgroup of $P$ on which $\beta$ acts as an automorphism of order $q^k$. The desired conclusion follows now by induction.
\end{proof}

\begin{proposition}\label{subgraph}
Let $G$ be a finite group and let $M$ be a normal subgroup of $G$. Then $\ngr{G/M}$ is a subgraph of $\ngr G$.    
\end{proposition}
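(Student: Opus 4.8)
The plan is to verify the containment of $\ngr{G/M}$ in $\ngr G$ separately on vertices and on arcs, the latter being the substantial point. For the vertices, observe that the order of $gM$ divides the order of $g$ for every $g\in G$; hence if $G/M$ has an element of prime order $r$, then so does $G$, and $\pi(G/M)\subseteq\pi(G)$. For the arcs, let $q\to p$ be an arc of $\ngr{G/M}$, witnessed by $\overline{x}\in G/M$ of order $p$ and $\overline{y}\in G/M$ of order $q$ (so $p\ne q$) with $\overline{y}$ normalizing $\GEN{\overline{x}}$. Put $\overline{H}=\GEN{\overline{x},\overline{y}}$; then $\GEN{\overline{x}}\nor\overline{H}$ and $\overline{y}\notin\GEN{\overline{x}}$, so $\overline{H}=\GEN{\overline{x}}\rtimes\GEN{\overline{y}}$ has order $pq$. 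We must produce $x,y\in G$ with $|x|=p$, $|y|=q$ and $y$ normalizing $\GEN x$.

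I would then split into two cases. If $\overline{y}$ centralizes $\overline{x}$, then $\overline{H}\cong C_{pq}$, so $G/M$---and hence $G$, by the remark on element orders above---contains an element $w$ of order $pq$; then $w^{q}$ has order $p$ and is centralized, hence normalized, by $w^{p}$ of order $q$, and we are done. If instead $\overline{y}$ acts nontrivially on $\GEN{\overline{x}}\cong C_p$, the induced automorphism has $q$-power order dividing $|\aut{C_p}|=p-1$, hence order exactly $q$, so $q\mid p-1$. Let $L$ and $K$ be the full preimages in $G$ of $\overline{H}$ and of $\GEN{\overline{x}}$, so that $M\nor K\nor L$ with $[K:M]=p$ and $[L:K]=q$. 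Choose $P\in\syl{p}{K}$, which is nontrivial since $p\mid[K:M]$. By the Frattini argument applied to $K\nor L$ and $P\in\syl{p}{K}$, we get $L=K\,\norm{L}{P}$, so $q=[L:K]=[\norm{L}{P}:\norm{L}{P}\cap K]$ divides $|\norm{L}{P}|$; by Cauchy's theorem pick $y\in\norm{L}{P}$ of order $q$. Conjugation by $y$ is then an automorphism $\beta$ of the $p$-group $P$ with $\beta^{q}=1$. If $\beta=1$, any $x\in P$ of order $p$ is centralized by $y$; if $\beta\ne1$, then $\beta$ has order $q$, and since $q\mid p-1$ Lemma~\ref{ActionNT} (with $k=1$) supplies $x\in P$ of order $p$ with $\GEN x$ invariant under $\beta$, i.e.\ normalized by $y$. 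In either case $L$, and thus $G$, contains the required configuration, so $q\to p$ is an arc of $\ngr G$.

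The step I expect to be the crux is the passage from $\overline{H}$ back to $G$ in the second case: one cannot just lift $\overline{x}$ and $\overline{y}$, since a preimage of $\overline{y}$ may have order a proper $q$-power multiple of $q$, with no power of it of order $q$ surviving modulo $M$. Combining the Frattini argument with Cauchy's theorem is exactly what manufactures a genuine element $y$ of order $q$ normalizing a Sylow $p$-subgroup of $K$, with no control over its image in $G/M$; Lemma~\ref{ActionNT} then upgrades ``$y$ normalizes $P$'' to ``$y$ normalizes a subgroup of order $p$''. Finally, the case where $\overline{y}$ centralizes $\overline{x}$ genuinely needs its own elementary treatment, because there $q$ need not divide $p-1$ and Lemma~\ref{ActionNT} does not apply.
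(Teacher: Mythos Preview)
Your proof is correct and follows essentially the same approach as the paper's: both pass to the preimage of the $C_p\rtimes C_q$ subgroup in $G/M$, dispose of the cyclic case via an element of order $pq$, and in the Frobenius case use the Frattini argument to find an element of order $q$ normalizing a Sylow $p$-subgroup, then invoke Lemma~\ref{ActionNT}. The only cosmetic difference is that the paper takes $P$ as a Sylow $p$-subgroup of the full preimage $H$ rather than of $K$, but since $[L:K]=q$ is coprime to $p$ these coincide.
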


\begin{proof}
Clearly the vertex set of $\ngr{G/M}$ is contained in that of $\ngr G$. Also, let $p,q\in\pi(G/M)$ be such that $q\rightarrow p$ is an arc of $\ngr{G/M}$; this means that there exists a subgroup $H$ of $G$, containing $M$, with $H/M\cong C_p\rtimes C_q$. If $H/M$ has an element of order $pq$, then the same holds for $H$ and we get the arc $q\rightarrow p$ in $\ngr G$; thus we can assume that $H/M$ is not a direct product, which implies that $q$ is a divisor of $p-1$. Let $K$ be the subgroup of $H$ containing $M$ such that $K/M\in\syl p{H/M}$, and let $P$ be a Sylow $p$-subgroup of $H$. Clearly $P$ is contained in $K$, and by the Frattini argument we get $H=K\norm H P$. Since the $q$-part of $|H|$ is strictly larger than that of $|K|$, we get that $q$ is a divisor of $|\norm H P|$ and we can consider a subgroup $C\subseteq\norm H P$ having order $q$. 

If $C$ centralizes $P$, then we can easily construct an element of order $pq$ of $G$. Otherwise, Lemma~\ref{ActionNT} yields the existence of a subgroup $U$ of $P$ such that $|U|=p$ and $C\subseteq\norm G U$. Now the subgroup $UC$ yields the arc $q\rightarrow p$ in $\ngr G$, and the proof is complete.
\end{proof}

\begin{remark}\label{CpCq2}
It is not true in general that if $M$ is a normal subgroup of $G$ and $G/M$ contains a subgroup $T$ isomorphic to the Frobenius group $C_p\rtimes C_q$, then $G$ contains a subgroup isomorphic to $T$. For example, if $p\equiv 1\, \mod\, q$ then there is a positive integer $r$ of order $q$ modulo $p$, i.e. $r\not\equiv 1\, \mod\, p$ and $r^q\equiv 1\, \mod\, p$. Then the center of the group $G=\GEN{a,b\mid a^p=b^{q^2}=1, a^b=a^r}$ is $M=\GEN{b^q}$, and $G/M\cong T$, but $G$ does not have any subgroup isomorphic to $T$. In this case we have $\ngr G=(p \leftrightarrows q)$ and $\ngk{G}=(p-q)$, while $\ngr{G/M} = (p\leftarrow q)$ and $\ngk{G/M}=(p \quad q)$.
\end{remark}

We are ready to start our discussion about the N-Prime Graph Question, that was presented in the introduction. 
Note that, by \cite[Corollary~4.1]{CohnLivingstone1965}, $\ngr G$ and $\ngr{V(\Z G)}$ have the same set of vertices. Moreover, it is clear that $\ngr G$ is a subgraph of $\ngr{V(\Z G)}$. So, the N-Prime Graph Question amounts to understanding if, for $p,q$ distinct primes in $\pi(G)$, the existence of the arc $q\rightarrow p$ in $\ngr{V(\Z G)}$ implies the existence of the same arc in $\ngr G$. 

\medskip

A finite group $G$ is called \emph{rational} if its ordinary character table contains only rational entries. As is well known, this is equivalent to the fact that every element $x$ of $G$ is conjugate in $G$ to every generator of $\GEN x$, which is also equivalent to the condition 
$\norm G{\GEN x}/\cent G x\cong \aut{\GEN x}$ for every $x\in G$. 
It has been proved in Corollary~H of \cite{BKMdR} that (PQ) holds for rational groups.
Based on this we prove that also (NPQ) holds for such groups.

\begin{theorem}\label{rational}
Let $G$ be a rational group. Then (NPQ) holds for $G$.
\end{theorem}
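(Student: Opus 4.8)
The plan is to reduce the statement to (PQ) for rational groups, which is available by Corollary~H of \cite{BKMdR}, together with a structural analysis of what an arc $q\to p$ in $\ngr{V(\Z G)}$ forces. Since $\ngr G$ is always a subgraph of $\ngr{V(\Z G)}$, and the two graphs have the same vertex set by \cite[Corollary~4.1]{CohnLivingstone1965}, it suffices to show: for distinct primes $p,q\in\pi(G)$, if $q\to p$ is an arc of $\ngr{V(\Z G)}$ then $q\to p$ is an arc of $\ngr G$. So assume $V(\Z G)$ has a subgroup $S\cong C_p\rtimes C_q$. There are two cases. If $S$ is cyclic, i.e. $V(\Z G)$ has an element of order $pq$, then (PQ) for $G$ (a consequence of Corollary~H of \cite{BKMdR}) gives that $G$ has an element of order $pq$, hence the arc $q\to p$; here rationality is used only through the already-known (PQ). The substantive case is when $S$ is a genuine Frobenius group, which forces $q\mid p-1$.

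In the Frobenius case, first I would pass to the cyclic subgroup $C_q=\GEN u$ of $S$; by \cite[Corollary~4.1]{CohnLivingstone1965} (or directly, since $G$ is rational so $V(\Z G)$ has elements of prime order only for primes in $\pi(G)$) and more usefully by the First Zassenhaus Conjecture for the relevant torsion units, I want to say that $u$ is rationally conjugate to an element $g\in G$ of order $q$. The key point is that for rational groups one has strong control of torsion units: I expect to use that, in a rational group $G$, every torsion unit of $V(\Z G)$ of prime power order is conjugate in $\Q G$ to a group element — this is exactly the kind of statement underlying Corollary~H of \cite{BKMdR}. Granting this, after replacing $S$ by a $\Q G$-conjugate we may assume the $C_q$-part is generated by an honest $g\in G$ with $|g|=q$. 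Then the $C_p$-part is a torsion unit $v$ of order $p$ normalized by $g$; again by the rationality-driven Zassenhaus property, $v$ is $\Q G$-conjugate to some $h\in G$ of order $p$. The remaining task is to upgrade "$v$ is normalized by a group element $g$ of order $q$, and $v$ is conjugate to $h\in G$" into "$G$ has an element of order $p$ whose normalizer contains an element of order $q$."

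The mechanism for that upgrade is the character-theoretic/partial-augmentation criterion used throughout this area: $g$ normalizing $\GEN v$ means $g$ acts on the cyclic group $\GEN v\cong C_p$ as an automorphism of order $q$, i.e. $v^g = v^{r}$ for an integer $r$ of multiplicative order $q$ mod $p$. Transporting by the $\Q G$-conjugacy that sends $v$ to $h$, we obtain an element $g'\in \Q G$ (a priori only a unit, not a group element) with $h^{g'}=h^{r}$. Now I would invoke rationality of $G$ in its cleanest form: since $G$ is rational, $h$ is conjugate in $G$ to $h^r$ (any two generators of $\GEN h$ are $G$-conjugate), say $h^x=h^r$ with $x\in G$; the element $x$ has the property that $x$ normalizes $\GEN h$ and acts on it with an order divisible by the order of $r$, namely $q$. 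More precisely, $\norm G{\GEN h}/\cent G h\cong\aut{\GEN h}\cong C_{p-1}$ is cyclic of order divisible by $q$, so $\norm G{\GEN h}$ contains an element whose image in this quotient has order $q$; taking a suitable power we get $y\in\norm G{\GEN h}$ of $q$-power order acting with order exactly $q$, and then a further power of $y$ has order exactly $q$ (since the quotient is cyclic, the preimage of an order-$q$ element contains an order-$q$ element after adjusting by a $q'$-power). This $y$ together with $\GEN h$ gives the arc $q\to p$ in $\ngr G$, completing the proof.

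The main obstacle I anticipate is the second step: establishing that a torsion unit of $V(\Z G)$ of order $p$ (and of order $q$) in a rational group is rationally conjugate to a group element. This is essentially the content of the results of \cite{BKMdR} on rational groups, so the honest version of the proof will cite that paper for exactly the statement needed — likely that (PQ), and indeed the stronger conclusion that $\ngk{V(\Z G)}=\ngk G$ with controlled conjugacy of prime-order units, holds for rational $G$ — rather than reproving it. Once that input is in hand, everything else is the routine normalizer bookkeeping sketched above, using only that $\aut{C_p}$ is cyclic and that rationality makes $\norm G{\GEN h}$ surject onto it.
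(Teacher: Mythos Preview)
Your Case~1 (where the subgroup $S\cong C_p\rtimes C_q$ is cyclic) is fine and matches the paper. The problem is in Case~2. You propose to show that the torsion units of orders $p$ and $q$ are each rationally conjugate to group elements, deferring to \cite{BKMdR} for this. But Corollary~H of \cite{BKMdR} only asserts $\ngk{V(\Z G)}=\ngk G$; it does not give a Zassenhaus-type rational conjugacy statement for prime-order units in rational groups, and no such result is available in general. So the Frobenius branch of your argument, as written, rests on an unproved input.

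The repair is far simpler than the detour you attempt, and it is exactly the paper's argument: once you know $q\mid p-1$, you can forget the torsion unit $v$ entirely. Since $p\in\pi(G)$, choose \emph{any} $x\in G$ with $|x|=p$. Rationality of $G$ gives $\norm G{\GEN x}/\cent G x\cong\aut{\GEN x}$, a group of order $p-1$; hence $q$ divides $|\norm G{\GEN x}|$, and Cauchy's theorem yields an element of order $q$ in $\norm G{\GEN x}$, producing the arc $q\to p$ in $\ngr G$. You in fact reach this very computation at the end of your sketch, but you believed you needed the specific $h$ obtained from $v$ via Zassenhaus; any element of order $p$ does the job, so the whole conjugacy machinery is unnecessary.
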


\begin{proof}
	Let $p$, $q$ be vertices of $\ngr{V(\Z G)}$ and assume that the arc $q\rightarrow p$ is in $\ngr{V(\Z G)}$. If also $p\rightarrow q$ is an arc of $\ngr{V(\Z G)}$, then we know that $p-q$ is an edge of $\ngk{V(\Z G)}$ and the same holds for $\ngk{G}$ by \cite[Corollary H]{BKMdR}. In particular, the arc $q\rightarrow p$ is in $\ngr G$ and we are done. On the other hand, if $p\rightarrow q$ is not an arc of $\ngr{V(\Z G)}$, then $q$ is a divisor of $p-1$; but since $G$ is a rational group, for any element $x\in G$ of order $p$ we have $|\norm G{\GEN x}/\cent G x|=|\aut{\GEN x}|=p-1$, so there exists an element of order $q$ in $\norm G{\GEN{x}}$ and our claim is proved.
\end{proof}

The following result, which collects together Theorem~\ref{NPQreduction} and Corollary~\ref{NPQsolvable}, shows that the N-Prime Graph Question can be reduced to almost simple groups. This extends the corresponding result by Kimmerle and Konovalov for the Prime Graph Question.

\begin{theorem}\label{NPQ}
    Let $G$ be a finite group, and assume that {\rm{(NPQ)}} holds for every almost simple homomorphic image of $G$. Then {\rm (NPQ)} holds for $G$. In particular, {\rm{(NPQ)}} holds for every finite solvable group.
\end{theorem}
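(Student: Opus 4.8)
The plan is to follow the strategy of Kimmerle and Konovalov for the Prime Graph Question, adapted to the directed setting. Since $\ngr G$ is always a subgraph of $\ngr{V(\Z G)}$, it suffices to show that whenever $q\to p$ is an arc of $\ngr{V(\Z G)}$, the same arc lies in $\ngr G$. Fix such an arc; this means $V(\Z G)$ has a subgroup isomorphic to $C_p\rtimes C_q$, and if this subgroup is a direct product then $p-q$ is an edge of $\ngk{V(\Z G)}$, and one can try to invoke the reduction for (PQ). The genuinely new case is when $V(\Z G)$ contains a \emph{Frobenius} subgroup $T=C_p\rtimes C_q$ (so $q\mid p-1$) with no element of order $pq$; this is where the argument must actually do something.

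The first step is a reduction to the case where $G$ has no nontrivial \emph{solvable} normal subgroup. Suppose $M\nor G$ is a minimal normal subgroup; consider the natural homomorphism $\varphi_M:V(\Z G)\to V(\Z(G/M))$. If the restriction of $\varphi_M$ to $T$ is injective, then $G/M$ has a subgroup isomorphic to $T$ inside $V(\Z(G/M))$, so (NPQ) for $G/M$ (which holds by induction, since every almost simple homomorphic image of $G/M$ is one of $G$) gives the arc $q\to p$ in $\ngr{G/M}$, hence in $\ngr G$ by Proposition~\ref{subgraph}. So we may assume $\ker(\varphi_M|_T)\neq 1$. Standard facts about the kernel of $\varphi_M$ on torsion units (it is a $p$-group when $M$ is an elementary abelian $p$-group, by results on $V(\Z G)$; cf.\ the Coleman/Bass-type arguments used in \cite{KK}) force $M$ to be a $p$-group or a $q$-group, and one uses the structure of $T$ — kernel of order $p$, complement cyclic of order $q$ — to pin down exactly which cyclic subgroup of $T$ lies in the kernel and to lift an element of the relevant order back into $G$. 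Concretely, if $M$ is a $p$-group and $\ker(\varphi_M|_T)$ meets the kernel $C_p$ of $T$, one gets an element of order $q$ in $V(\Z G)$ normalizing a large $p$-subgroup coming from $M$, and then Lemma~\ref{ActionNT} produces the subgroup $C_p\rtimes C_q$ inside $G$ itself — exactly as in the proof of Proposition~\ref{subgraph}. The remaining sub-cases (where the kernel meets the complement, or $M$ is a $q$-group) are handled symmetrically, again combining a lifting argument with Lemma~\ref{ActionNT}.

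It then remains to treat $G$ with trivial solvable radical, so $\soc G=S_1\times\cdots\times S_n$ is a product of nonabelian simple groups and $G$ embeds in $\aut{\soc G}$. Here one projects to the almost simple quotients: for each $i$, let $N_i=\bigcap_{j\ne i}C_G(S_j)$, so $G/N_i$ is almost simple with socle $S_i$, and $(NPQ)$ holds for each $G/N_i$ by hypothesis. The point is that the Frobenius subgroup $T\le V(\Z G)$, or rather the arc $q\to p$ it witnesses, can be detected in one of these almost simple quotients: using again that $\ker(\varphi_{N_i}|_T)$ is a $\pi$-group for a single prime (or trivial), and that $T$ has prime-order kernel and cyclic complement, some $\varphi_{N_i}$ must be injective on a cyclic subgroup of $T$ of the right order, yielding the arc $q\to p$ in $\ngr{G/N_i}$ and hence in $\ngr G$. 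The final assertion, that (NPQ) holds for every finite solvable group, is immediate: a solvable group has no almost simple homomorphic image, so the hypothesis of the theorem is vacuously satisfied.

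The main obstacle I expect is the bookkeeping in the second step: controlling the kernel of $\varphi_M$ on the specific Frobenius subgroup $T$ and ensuring that one always recovers a genuine $C_p\rtimes C_q$ (and not merely a cyclic $C_p\times C_q$, or nothing) inside $G$. This is where the directedness of $\ngr{}$ makes the argument more delicate than the (PQ) reduction: one must track which of the two vertices plays the role of the normalized subgroup, and Lemma~\ref{ActionNT} together with the Frattini argument of Proposition~\ref{subgraph} is the tool that keeps this under control.
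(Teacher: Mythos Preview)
Your overall shape---minimal counterexample, analyze the kernel of $\varphi_M$ for a minimal normal $M$, then reduce to trivial Fitting subgroup---matches the paper, but there are two genuine gaps in the execution.

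First, in the solvable-radical step you write that when $M$ is a $p$-group and $\ker(\varphi_M|_T)$ is nontrivial, ``one gets an element of order $q$ in $V(\Z G)$ normalizing a large $p$-subgroup coming from $M$, and then Lemma~\ref{ActionNT} produces the subgroup $C_p\rtimes C_q$ inside $G$ itself.'' This does not work: Lemma~\ref{ActionNT} needs an automorphism of a $p$-group, and an element $v\in V(\Z G)$ does not act on $M\subseteq G$ by conjugation (conjugation by $v$ is an automorphism of $\Z G$, not of $G$). The paper's fix is simple but essential: once you know every minimal normal subgroup has order divisible by $p$, you conclude $\fit G$ is a (possibly trivial) $p$-group; if $\fit G\ne 1$, take \emph{any} element of order $q$ in $G$ itself (such an element exists since $q\in\pi(G)$) and apply Lemma~\ref{ActionNT} to its action on $\fit G$. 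No lifting from $T$ is needed.

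Second, your treatment of the case $\fit G=1$ is not right. You set $N_i=\bigcap_{j\ne i}\cent G{S_j}$ and claim $G/N_i$ is almost simple, but (i) the simple factors $S_j$ of $\soc G$ need not be normal in $G$ (they are permuted by $G$), so the $\cent G{S_j}$ need not be normal, and (ii) even when they are, your $N_i$ gives the wrong quotient (you would want $\cent G{S_i}$, not the intersection over $j\ne i$). More importantly, there is no reason any such projection is injective on $T$. The paper does not use almost simple quotients here at all, except to dispose of the case where $G$ itself is almost simple. Instead it argues directly: first, no minimal normal subgroup $M$ can have order divisible by $q$ (else one produces elements of order $pq$, or $G$ is forced to be almost simple); then for $M=S_1\times\cdots\times S_t$, one shows $q\nmid|\norm G{S_i}|$ (otherwise a $q$-element would normalize a Sylow $p$-subgroup of $S_i$ and Lemma~\ref{ActionNT} applies); hence $G/\bigcap_i\norm G{S_i}$ embeds in $\sym t$ with $q$ dividing its order, and conjugating an element of order $p$ in $S_1$ around a $q$-cycle yields an element of order $pq$ in $G$, the final contradiction.
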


\begin{proof}
 As observed in the paragraph after Remark~\ref{CpCq2}, we only have to show that, for $p$ and $q$ distinct primes in $\pi(G)$, the existence of the arc $q\rightarrow p$ in $\ngr{V(\Z G)}$ implies the existence of the same arc in $\ngr G$. 
    For a proof by contradiction, we will assume that $G$ is a minimal counterexample to this fact with respect to the primes $p$ and $q$. Note that, by our assumptions and by Remark~\ref{GK}, if $X$ is any almost simple homomorphic image of $G$ then also the Gruenberg-Kegel graphs $\ngk{X}$ and $\ngk{V(\Z X)}$ coincide; therefore, by \cite{KK}, we have $\ngk{G}=\ngk{V(\Z G)}$, and the presence of both arcs $q\leftrightarrows p$ in $\ngr{V(\Z G)}$ would imply the presence of the edge $q-p$ in $\ngk{G}$. As a consequence both arcs $q\leftrightarrows p$ would be in $\ngr G$, against the fact that $G$ is a counterexample.  So $\ngr{V(\Z G)}$ has the arc $q\rightarrow p$ but not the arc $p\rightarrow q$; in particular, $V(\Z G)$ has a subgroup $T$ which is a Frobenius group of order $pq$, and $q$ is a divisor of $p-1$. 
    
Let $M$ be a minimal normal subgroup of $G$, and observe that the factor group $G/M$ satisfies our assumptions. Denoting by $\varphi:V(\Z G)\to V(\Z (G/M))$ the natural homomorphism, if $p$ does not divide $|M|$, then we get $T\cap \ker\varphi=1$ (see \cite[Lemma~7.5]{Seh93}) and hence $\varphi(T)\cong T$. It follows that $q\rightarrow p$ is an arc of $\ngr{V(\Z(G/M))}$ and, by our minimality assumption, it is also an arc of $\ngr{G/M}$. But then $q\rightarrow p$ is an arc of $\ngr G$ by Proposition~\ref{subgraph}, a contradiction. Therefore, no minimal normal subgroup of $G$ can be a $p'$-group.   

The discussion in the paragraph above implies that the Fitting subgroup $\fit G$ of $G$ is a $p$-group, and we claim that in fact $\fit G$ must be trivial. As in the proof of Proposition~\ref{subgraph}, consider a subgroup $C$ of $G$ having order $q$ and assume $\fit G\neq 1$. If $C$ centralizes $\fit G$ then we can construct an element of order $pq$ of $G$, not our case. Otherwise, we can use Lemma~\ref{ActionNT} and find a subgroup $U$ of $\fit G$ such that $|U|=p$ and $C\subseteq\norm G U$. Now the subgroup $UC$ yields the arc $q\rightarrow p$ in $\ngr G$, again a contradiction. 

To sum up, we have $\fit G=1$ and every minimal normal subgroup of $G$ is nonabelian of order divisible by $p$. Note that the last claim of the statement, concerning solvable groups, is already proved at this stage.

Our next claim is that $G$ cannot have any minimal normal subgroup whose order is divisible also by $q$. In fact, assume the contrary and consider a minimal normal subgroup $M$ of $G$ such that $pq$ divides $|M|$. Since $M\cong S_1\times\cdots\times S_t$ is a direct product of isomorphic nonabelian simple groups, we see that $pq$ divides the order of each $S_i$. Now $t$ must be $1$, as otherwise we could produce an element of order $pq$ in $M$ (hence in $G$) by multiplying an element of order $p$ of $S_1$ with an element of order $q$ of $S_2$. Moreover, if such a subgroup $M$ exists, then it must be the unique minimal normal subgroup of $G$. In fact, an element of order $p$ in any other minimal normal subgroup of $G$ would commute with an element of order $q$ of $M$, again yielding the contradiction that $G$ has an element of order $pq$. So $M$ would be simple, and the unique minimal normal subgroup of $G$. But then $G$ would be an almost simple group, and by our assumptions it would not be a counterexample.  

As another remark, given a minimal normal subgroup $M\cong S_1\times\cdots\times S_t$ of $G$, for any fixed $i\in\{1,\ldots,t\}$ we claim that $q$ does not divide $|\norm G{S_i}|$. Assuming the contrary, we could take an element $y$ of order $q$ in $\norm G{S_i}$. Recalling that $S_i$ is a $q'$-group having an order divisible by $p$, we could find a (nontrivial) Sylow $p$-subgroup $P$ of $S_i$ such that $y$ normalizes $P$ (see \cite[8.2.3(a)]{KS}), and the same argument as in the third paragraph of this proof leads to a contradiction.  

Now, let $M\cong S_1\times\cdots\times S_t$ and define $$K=\bigcap_{i=1}^t\norm G{S_i};$$ in view of the paragraph above, $K$ is a $q'$-group.  Since the $S_i$ are non-abelian simple groups, conjugation yields a permutation action of $G$ on the set  $\{S_1,\ldots,S_t\}$ with kernel $K$. So, $G/K$ embeds in the symmetric group ${\rm{Sym}} (\{S_1,\ldots,S_t\})\cong{\rm{Sym}}(t)$ and, if $y\in G$ is an element of order $q$, then $yK$ can be identified (up to renumbering) with a permutation containing the cycle $(1,2,\ldots,q)$. Take an element $x_1\in S_1$ of order $p$ and, setting $x_i=x_1^{y^{i-1}}$ for all $i\in\{1,\ldots, q\}$, observe that $x_i$ lies in $S_{i}$; in particular, the elements $x_i$ pairwise centralize each other. Defining $x=x_1\cdots x_q$, it is not difficult to see that $x$ and $y$ commute, hence again we get that $xy$ is an element of order $pq$ in $G$. This is the final contradiction that completes the proof.
\end{proof}

The following corollary can be immediately derived from the theorem above and from Theorem~\ref{rational}. 

\begin{corollary}\label{rationalsocle} Let $G$ be a finite group. If every almost simple homomorphic image of $G$ is a rational group, then {\rm (NPQ)} holds for $G$.
\end{corollary}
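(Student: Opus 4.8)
The plan is to derive the statement directly from the two results that immediately precede it, so the argument is a short deduction rather than a fresh inductive analysis. The essential observation is that the hypothesis placed on $G$ is, after a single translation step, exactly the input demanded by the reduction theorem: the word ``rational'' becomes ``(NPQ) holds'' by invoking Theorem~\ref{rational} on each almost simple homomorphic image.

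First I would fix an arbitrary almost simple homomorphic image $X$ of $G$. By hypothesis $X$ is a rational group, so Theorem~\ref{rational} applies to $X$ and yields that (NPQ) holds for $X$, that is, $\ngr X=\ngr{V(\Z X)}$. Since $X$ was an arbitrary almost simple homomorphic image, this shows that (NPQ) holds for \emph{every} almost simple homomorphic image of $G$. I would then invoke Theorem~\ref{NPQ}, whose hypothesis is precisely this assertion; its conclusion gives that (NPQ) holds for $G$ itself, which is exactly the claim of the corollary.

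There is essentially no obstacle to overcome, since both ingredients have already been established in the excerpt; the only point requiring care is the logical matching of hypotheses. One should note that an almost simple homomorphic image of $G$ is itself a (rational) group to which Theorem~\ref{rational} applies directly, with no further passage to quotients or subgroups needed, and that the family of conclusions obtained coincides verbatim with the hypothesis of Theorem~\ref{NPQ}. This confirms that the two results chain together cleanly and that the corollary follows at once.
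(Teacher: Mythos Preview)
Your proposal is correct and matches the paper's own approach: the paper states that the corollary follows immediately from Theorem~\ref{NPQ} and Theorem~\ref{rational}, which is precisely the chaining you describe.
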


\section{The N-Prime Graph Question: Theorem~\ref{NPQAnPSL}}

In this section we prove that (NPQ) holds for every almost simple group whose socle is isomorphic to an alternating group, or to ${\rm{PSL}}_2(r^f)$ where $r$ is a prime and $f\in\{1,2\}$ (Theorem~\ref{alternating} and Theorem~\ref{PSL}, respectively); this, together with Theorem~\ref{NPQ} and Corollary~\ref{rationalsocle}, will conclude the proof of  Theorem~\ref{NPQAnPSL} of the introduction.

Before stating the next result we mention that, by Theorem~1.1 of \cite{BM} and Theorem~A of \cite{BM2017}, for every almost simple group $G$ with a socle isomorphic to an alternating group or to ${\rm{PSL}}_2(r^f)$ where $r$ is a prime and $f\in\{1,2\}$, we have $\ngk{V(\Z G)}=\ngk{G}$.

\begin{theorem}\label{alternating}
 Let $G$ be an almost simple group whose socle is isomorphic to the alternating group ${\rm A}_n$, for $n\geq 5$. Then {\rm{(NPQ)}} holds for $G$. 
\end{theorem}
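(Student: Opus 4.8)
The plan is to follow the now-familiar reduction: it suffices to prove that, for distinct primes $p,q\in\pi(G)$, if $q\to p$ is an arc of $\ngr{V(\Z G)}$ then it is already an arc of $\ngr G$. First I would dispose of the case in which $p\to q$ is also an arc of $\ngr{V(\Z G)}$: then $p-q$ is an edge of $\ngk{V(\Z G)}$ by Remark~\ref{GK}, and since $\ngk{V(\Z G)}=\ngk G$ by \cite{BM,BM2017} this edge lies in $\ngk G$, so both arcs $q\leftrightarrows p$ lie in $\ngr G$. Hence I may assume $p\to q$ is not an arc of $\ngr{V(\Z G)}$; then $V(\Z G)$ has no element of order $pq$, so the arc $q\to p$ is witnessed by a Frobenius subgroup $T\cong C_p\rtimes C_q$ of $V(\Z G)$, and $q$ divides $p-1$. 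Since $\pi(G)$ is the set of primes at most $n$, we have $q<p\le n$, and the goal becomes: exhibit in $G$ a subgroup of order $p$ on which some element of $G$ of order $q$ acts as an automorphism of order $q$.

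Next I would treat the cases settled by a direct construction inside ${\rm A}_n$ or ${\rm S}_n$. Recall that the normalizer of a $p$-cycle $x$ in ${\rm S}_p$ is the Frobenius group $C_p\rtimes C_{p-1}$, which (since $q\mid p-1$) contains a non-abelian $C_p\rtimes C_q$; moreover an element realizing the corresponding power automorphism of order $q$ on $\GEN x$ is a product of $(p-1)/q$ cycles of length $q$, hence an even permutation when $q$ is odd. Thus if $q$ is odd then $C_p\rtimes C_q\le {\rm A}_n\le G$ and we are done. If $q=2$ and $G\neq{\rm A}_n$, then $G\supseteq {\rm S}_n$ or $n=6$ with $p\in\{3,5\}$; in either case $G$ contains the dihedral group $C_p\rtimes C_2$ (inside ${\rm S}_p$, resp.\ inside ${\rm A}_5\le{\rm A}_6\le G$), and we are done. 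Finally, if $q=2$ and $G={\rm A}_n$, note that the involution inverting a $p$-cycle is a product of $(p-1)/2$ transpositions, hence even when $p\equiv 1\pmod 4$, while if $p\equiv 3\pmod 4$ and $n\ge p+2$ one obtains an even inverting involution by multiplying it by a transposition on two of the spare points; either way $C_p\rtimes C_2\le {\rm A}_n$ and we are done.

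This leaves exactly the case $q=2$, $G={\rm A}_n$, $p\equiv 3\pmod 4$ and $n\in\{p,p+1\}$ (so $p\ge 7$), in which ${\rm A}_n$ genuinely contains no copy of $C_p\rtimes C_2$ and no element of order $2p$, so a purely combinatorial argument cannot suffice; what I would prove is that $V(\Z{\rm A}_n)$ does not contain $C_p\rtimes C_2$ either. So suppose $\GEN{u,v}\cong C_p\rtimes C_2$ inside $V(\Z{\rm A}_n)$ with $|u|=p$, $|v|=2$ and $u^v=u^{-1}$. Since $p\le n<2p$, the Sylow $p$-subgroups of ${\rm A}_n$ have order $p$; exploiting the rigidity of the partial-augmentation (HeLP) constraints in this cyclic-defect situation — the same rigidity that underlies the proof of $\ngk{V(\Z{\rm A}_n)}=\ngk{{\rm A}_n}$ in \cite{BM} — I would conclude that the prime-order unit $u$ is conjugate in $\Q{\rm A}_n$ to a power $x^j$ of a $p$-cycle $x$, with $j\not\equiv 0\pmod p$. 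Then $x^j$ is conjugate in $\Q{\rm A}_n$ to $u$, hence to $u^{-1}=u^v$, hence to $x^{-j}$; as rational conjugacy of elements of a finite group coincides with ordinary conjugacy, $x^j$ and $x^{-j}$ are conjugate in ${\rm A}_n$. But the ${\rm S}_n$-class of the $p$-cycle $x^j$ splits into two ${\rm A}_n$-classes (its centralizer $\GEN{x^j}$ lies in ${\rm A}_n$), and these classes are interchanged by inversion precisely when $-1$ is a non-square modulo $p$, i.e.\ when $p\equiv 3\pmod 4$ — a contradiction. Hence the arc $2\to p$ is absent from $\ngr{V(\Z{\rm A}_n)}$ in this case, which completes the argument.

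I expect this last case to be the main obstacle: all the others reduce to elementary permutation-group combinatorics, whereas excluding $C_p\rtimes C_2$ from $V(\Z{\rm A}_n)$ rests on the fact that a torsion unit of prime order $p$ in $V(\Z{\rm A}_n)$ is rationally conjugate to a group element. I would establish this by pushing through the HeLP/partial-augmentation analysis in defect $1$ (as in \cite{BM}), or, more concretely, by checking directly that of the two ${\rm A}_n$-classes of $p$-cycles exactly one can carry a non-zero partial augmentation of such a unit, which is then forced to equal $1$.
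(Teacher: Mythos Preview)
Your case analysis and reductions are correct and match the paper's: both proofs whittle the problem down to $q=2$, $G={\rm A}_n$, $p\equiv 3\pmod 4$, $n\in\{p,p+1\}$, and then argue that $V(\Z{\rm A}_n)$ contains no dihedral group of order $2p$. The difference is in how this last exclusion is obtained.

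Your plan is to first prove that the order-$p$ unit $u$ is rationally conjugate to a $p$-cycle (via a HeLP computation you sketch but do not carry out) and then derive a contradiction from $u\sim u^{-1}$. This route is valid --- the HeLP inequalities for one of the degree-$(p-1)/2$ characters of ${\rm A}_p$ do force $\{\varepsilon_a(u),\varepsilon_{a^{-1}}(u)\}=\{0,1\}$ --- but it is not what \cite{BM} actually provides (that paper rules out units of order $pq$, not rational conjugacy of prime-order units), and the required Gauss-sum computation is real work you have not done. The paper bypasses this entirely: from $u\sim u^{-1}$ one gets $\widehat\chi(u)\in\R$ for every $\chi\in\irr{G}$; choosing $\chi$ with $\chi(a)\notin\R$ (which exists precisely because $a$ is not real in ${\rm A}_n$) and writing
\[
\widehat\chi(u)=\varepsilon_a(u)\chi(a)+(1-\varepsilon_a(u))\overline{\chi(a)}
\]
immediately forces $2\varepsilon_a(u)-1=0$, impossible for an integer. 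So the paper uses the dihedral hypothesis $u\sim u^{-1}$ directly, rather than first proving the stronger (and unnecessary) statement that an arbitrary order-$p$ unit is rationally conjugate to a group element. Your argument would be complete once the HeLP step is actually written out; the paper's is complete in three lines.
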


\begin{proof}
Let $p$ and $q$ be primes in $\pi(G)$ and, by means of contradiction, suppose that $q\to p$ is an arc of $\ngr{V(\Z G)}$ but not of  $\ngr{G}$. Then, taking into account Theorem~\ref{rational}, $G$ is not isomorphic to the symmetric group ${\rm S}_n$, so we have $G\cong {\rm A}_n$ unless $n=6$ (and $|G|$ divides $4|{\rm A}_n|$). Moreover, $G$ does not have elements of order $pq$ and hence, by the main theorem of \cite{BM}, neither does $V(\Z G)$. Thus $V(\Z G)$ has a subgroup $T=\GEN{u}\rtimes \GEN{v}$ with $|u|=p$, $|v|=q$ and $u^v=u^r$ for a suitable integer $r$ such that $r\not\equiv 1\, \mod \,p$ and $r^q\equiv 1\, \mod\, p$.

By Corollary~4.1 of \cite{CohnLivingstone1965}, ${\rm A}_n$ has then an element $a$ of order $p$; in particular we have $n\ge p$, and we may take $a=(1,2,\dots,p)$. If $n\geq p+2$, then all the elements of ${\rm A}_n$ with the same cyclic type as $a$ lie in a single conjugacy class of ${\rm A}_n$, hence $a^r$ is conjugate to $a$ in ${\rm A}_n$. However, if $q$ is odd, then the same conclusion holds also for $n\in\{p,p+1\}$, because the map $x\mapsto x^r$ on ${\rm A}_n$ induces a permutation of odd order on the set $\mathcal{C}$ of conjugacy classes of elements having order $p$ in ${\rm A}_n$ and, in this case, we have $|\mathcal{C}|=2$. Now, if $a^r$ is conjugate to $a$ in ${\rm A}_n$, then we can find an element $b\in \norm{{\rm A}_n}{\gen a}$ with $|b|=q$, against our assumption. We conclude that $n$ lies in $\{p,p+1\}$ and $q=2$. Moreover, $T$ is a dihedral group of order $2p$ and $a$ is not conjugate to $a^{-1}$ in ${\rm A}_n$, i.e., $a$ is not a real element of ${\rm A}_n$. Note that the latter property does not hold if $p\equiv 1\,\mod\, 4$, hence $n$ cannot be $6$ and we have $G\cong {\rm A}_n$. 

Since the element $a\in G$ is not real, we can take an irreducible character $\chi$ of $G$ such that $\chi(a)\not\in \R$, and we denote by $x$ the (non-zero) imaginary part of $\chi(a)$. On the other hand, as $u$ is conjugate to $u^{-1}$ in $T$, the linear expansion $\widehat{\chi}$ of $\chi$ to $\Z G$ takes a real value on $u$.


Now, $G$ has exactly two conjugacy classes of elements of order $p$, represented by $a$ and $a^{-1}$. 
By the Berman-Higman Theorem and \cite[Theorem~2.3]{Hertweck2007}, if $g\in G$ is such that $\varepsilon_g(u)\ne 0$, then $g$ is conjugate to $a$ or $a^{-1}$.
Therefore $\varepsilon_a(u)+\varepsilon_{a^{-1}}(u)=1$ and 
		\[{\widehat{\chi}}(u)=\varepsilon_a(u)\chi(a)+\varepsilon_{a^{-1}}(u)\overline{\chi(a)} = 
		\varepsilon_a(u)\chi(a)+(1-\varepsilon_a(u))\overline{\chi(a)}=\varepsilon_a(u)(\chi(a)-{\overline{\chi(a)}})+{\overline{\chi(a)}}.\]
Comparing the imaginary parts of the two sides, we get $0=x\cdot(2\varepsilon_a(u)-1)$; but $x\neq 0$, and $\varepsilon_a(u)$ is then an integer such that $2\varepsilon_a(u)-1=0$. This is the final contradiction which completes the proof.
\end{proof}

We move next to almost simple groups whose socle is a projective special linear group. The following general lemma will be crucial in our discussion, and it might be useful to prove that (NPQ) holds in other cases.

\begin{lemma}\label{independence}
Let $G$ be a finite group, let $p$ and $q$ be distinct primes, and let $\zeta$ be a complex primitive $p$-th root of unity. Also, let $\sigma$ be an automorphism of $\Q_p=\Q(\zeta)$ of order $q$, let $F=\{x\in \Q_p : \sigma(x)=x\}$ and let $r$ be an integer such that $\sigma(\zeta)=\zeta^r$.
Denoting by $A$ a set of representatives for the conjugacy classes of elements of order $p$ in $G$, suppose that $G$ has a class function $\chi:G\to \C$ satisfying the following conditions:
\begin{enumeratei}
	\item $\chi$ is an $F$-linear combination of irreducible characters of $G$.
	\item For every $a\in A$, we have that $[\Q_p:\Q(\chi(a))]$ is not divisible by $q$.	
	\item The complex numbers $\chi(a)$, for $a\in A$, are linearly independent over~$\Q$.
\end{enumeratei}
Then $V(\Z G)$ does not have a Frobenius subgroup of the form $C_p\rtimes C_q$. 
\end{lemma}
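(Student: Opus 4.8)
The plan is to argue by contradiction. Suppose $V(\Z G)$ contains a Frobenius subgroup $T=\GEN u\rtimes\GEN v$ with $|u|=p$, $|v|=q$ and $u^v=u^s$ for some integer $s$ of multiplicative order $q$ modulo $p$. Since $(\Z/p\Z)^\times$ is cyclic it has a unique subgroup of order $q$, which contains the residues of both $s$ and $r$; hence $r\equiv s^j\pmod p$ for some $j$ coprime to $q$, and replacing $v$ by $v^j$ we may assume $u^v=u^r$. In particular $u$ and $u^r$ are conjugate in $V(\Z G)$. Raising to the $r$-th power is a bijection on the set of elements of order $p$ in $G$ that respects $G$-conjugacy, so it induces a permutation $\pi$ of $A$; as $r^q\equiv 1\pmod p$, every $\pi$-orbit has size $1$ or $q$.

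Next I would extract the two consequences of the conjugacy $u\sim u^r$. First, for $\chi_i\in\Irr(G)$ afforded by $\rho_i$, the matrices $\rho_i(u^r)=\rho_i(u)^r$ and $\rho_i(u^v)=\rho_i(v)^{-1}\rho_i(u)\rho_i(v)$ are conjugate, so the eigenvalue multiset of $\rho_i(u)$ (consisting of $p$-th roots of unity, since $u^p=1$) is invariant under $\zeta\mapsto\zeta^r$; as $\sigma$ acts on $\Q_p$ by $\zeta\mapsto\zeta^r$, this says $\sigma(\widehat{\chi_i}(u))=\widehat{\chi_i}(u)$, where $\widehat{\chi_i}$ is the linear extension of $\chi_i$ to $\Z G$. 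By (a), together with the fact that $\sigma$ fixes $F$ pointwise, the same holds for $\chi$, i.e.\ $\widehat\chi(u)\in F$. Second, by the Berman--Higman theorem and \cite[Theorem~2.3]{Hertweck2007} (exactly as in the proof of Theorem~\ref{alternating}), the only classes on which $u$ has nonzero partial augmentation are those of elements of order $p$, and these augmentations, which are integers, sum to $1$; hence $\widehat\chi(u)=\sum_{a\in A}\varepsilon_a(u)\,\chi(a)$ with $\sum_{a\in A}\varepsilon_a(u)=1$.

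Now I would apply $\sigma$ to this expression. Using (a) once more, $\sigma(\chi(a))=\sum_i c_i\,\chi_i(a^r)=\chi(a^r)=\chi(\pi(a))$, so $\widehat\chi(u)=\sigma(\widehat\chi(u))=\sum_{a\in A}\varepsilon_{\pi^{-1}(a)}(u)\,\chi(a)$. Comparing this with $\widehat\chi(u)=\sum_{a\in A}\varepsilon_a(u)\,\chi(a)$ and invoking the $\Q$-linear independence in (c) (the coefficients $\varepsilon_{\pi^{-1}(a)}(u)-\varepsilon_a(u)$ being integers), I conclude that $\varepsilon_a(u)$ is constant along every $\pi$-orbit. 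It remains to show that $\pi$ is fixed-point-free: if $\pi(a)=a$ then $a^r$ is $G$-conjugate to $a$, so $\chi_i(a)$ is fixed by $\sigma$ for every $\chi_i\in\Irr(G)$, whence $\chi(a)\in F$ and $q=[\Q_p:F]$ divides $[\Q_p:\Q(\chi(a))]$, contradicting (b). Therefore every $\pi$-orbit has size exactly $q$, and summing the orbit-constant integer values $\varepsilon_a(u)$ gives $1=\sum_{a\in A}\varepsilon_a(u)\in q\Z$, a contradiction.

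The reduction to $u^v=u^r$ and the partial-augmentation bookkeeping are routine; the genuinely load-bearing step is the use of hypothesis (b) to forbid fixed points of $\pi$, which is what makes all orbit sizes equal $q$ and hence makes the final divisibility work. The one other point needing care is keeping the two actions straight — the Galois action of $\sigma$ on $\Q_p$ versus the induced permutation $\pi$ of the order-$p$ classes — and verifying their compatibility through the identity $\sigma(\chi(a))=\chi(a^r)$.
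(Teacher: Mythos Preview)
Your proof is correct and follows essentially the same route as the paper's: assume a Frobenius subgroup $\GEN{u}\rtimes\GEN{v}$ with $u^v=u^r$, use Berman--Higman and Hertweck to write $\widehat\chi(u)=\sum_{a\in A}\varepsilon_a(u)\chi(a)$, use (a) together with the conjugacy $u\sim u^r$ to get $\sigma$-invariance of $\widehat\chi(u)$, combine (c) with the identity $\sigma(\chi(a))=\chi(a^r)$ to force the $\varepsilon_a(u)$ to be constant on $\pi$-orbits, and use (b) to exclude fixed points so that $1=\sum_a\varepsilon_a(u)\in q\Z$. The only cosmetic differences are that you justify the normalisation $u^v=u^r$ explicitly and swap the order of the ``no fixed points'' and ``orbit constancy'' steps; the substance is identical.
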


\begin{proof}
Suppose, by means of contradiction, that $V(\Z G)$ has a Frobenius subgroup $T$ of the form $C_p\rtimes C_q$. So, $T=\GEN{u}\rtimes \GEN{v}$ with $|u|=p$, $|v|=q$, and we can assume $u^v=u^r$. 

Observe that the $r$-th power map defines a permutation $\varrho$ on the set $\{a^G\mid a\in A\}$ (consisting of the conjugacy classes of elements having order $p$ in $G$), given by $\varrho(a^G)= (a^r)^G$; as $r^q\equiv 1\,\mod\,p$, this permutation has an order dividing $q$. Moreover, for $\varphi\in\irr G$ and $a\in A$, we get $\varphi(a)\in\Q_p$ and $\sigma(\varphi(a))=\varphi(a^r)$.
By our assumption in (a), we have $\chi=\sum_{\varphi\in \Irr(G)} c_\varphi \varphi$ for suitable $c_\varphi\in F$, and hence $\sigma(\chi(a))=\chi(a^r)$ for every $a\in A$. Furthermore, taking into account that $[\Q_p:F]=q$, for every $a\in A$
the assumption in (b) yields $\chi(a)\not\in F$; in other words, we have $\sigma(\chi(a))\ne \chi(a)$ and, in particular, $a^r$ is not conjugate to  $a$ in $G$. As a consequence, all the orbits of the permutation $\varrho$ have cardinality $q$.

Every character $\varphi\in\irr G$ extends uniquely to a linear map $\C G\to \C$, which we also denote by $\varphi$.
As $u$ and $u^r$ are conjugate in $V(\Z G)$, and hence in $\C G$, it follows that $\varphi(u)=\varphi(u^r)$. Also, it is not difficult to see that we have $\varphi(u^r)=\sigma(\varphi(u))$. 
Similarly, $\chi$ extends uniquely to a linear map $\C G\to \C$, which we also denote $\chi$.
Using the expression of $\chi$ as a linear combination of irreducible characters of $G$ we obtain  $\sigma(\chi(u))=\chi(u^r)=\chi(u)$. 
On the other hand, by the Berman-Higman Theorem and \cite[Theorem~2.3]{Hertweck2007}, $\varepsilon_g(u)=0$ for every $g\in G$ of order different from $p$.
Hence, since $\chi$ is a class function, we have 
	\[\sum_{a\in A} \varepsilon_{a^{r}}(u) \chi(a) =\sum_{a\in A} \varepsilon_a(u) \sigma^{-1}(\chi(a)) = \sigma^{-1}(\chi(u))=\chi(u)=\sum_{a\in A} \varepsilon_a(u) \chi(a).\]
Since the partial augmentations are integers, our assumption in (c) implies that $\varepsilon_{a_1}(u)=\varepsilon_{a_2}(u)$ whenever $a_1^G$ and $a_2^G$ belong to the same $\varrho$-orbit. Thus, denoting by $B$ a subset of $A$ such that $\{b^G\mid b\in B\}$ is a set of representatives of these orbits, we get
	$$1=\sum_{a\in A} \varepsilon_a(u)=q\sum_{b\in B} \varepsilon_b(u),$$
a contradiction. 
\end{proof}

\begin{theorem}\label{PSL}
 Let $G$ be an almost simple group whose socle $S$ is isomorphic to ${\rm{PSL}}_2(r^f)$, for a prime $r$ and $f\in\{1,2\}$. Then {\rm{(NPQ)}} holds for $G$. 
\end{theorem}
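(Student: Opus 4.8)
The plan is to argue by contradiction: suppose the arc $q\to p$ lies in $\ngr{V(\Z G)}$ but not in $\ngr G$. First I would use Remark~\ref{GK} together with the equality $\ngk{V(\Z G)}=\ngk G$ — valid for every almost simple $G$ with socle $\mathrm{PSL}_2(r^f)$, $r$ prime and $f\le 2$, by \cite{BM} and \cite{BM2017} — to deduce that $p\to q$ is not an arc of $\ngr{V(\Z G)}$; hence $q$ divides $p-1$, the group $G$ has no element of order $pq$, and $V(\Z G)$ contains a Frobenius subgroup isomorphic to $C_p\rtimes C_q$. I would also record that, by the definition of the N-prime graph, the absence of the arc $q\to p$ in $\ngr G$ means exactly that $q\nmid|\norm G{\GEN x}|$ for every $x\in G$ of order $p$. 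Writing $m=r^f$ and noting that $\pi(G)=\pi(S)$, so that $p$ satisfies exactly one of $p=r$, $p\mid m-1$, $p\mid m+1$, I would treat these cases one by one: in each I will either produce an element of order $q$ inside $\norm G{\GEN x}$ for a suitable $x$ of order $p$ (contradicting the displayed equivalence), or verify the three hypotheses of Lemma~\ref{independence} (contradicting the existence of the subgroup $C_p\rtimes C_q$ in $V(\Z G)$).

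The cases where a $q$-element is readily found use only the structure of the maximal tori and their normalizers in $\mathrm{PSL}_2$, and in the overgroups $\mathrm{PGL}_2(m)$ and $\mathrm{P\Gamma L}_2(m)$ when $G>S$. If $p\mid m\pm 1$, the elements of order $p$ lie in a cyclic maximal torus of order $(m\pm 1)/d$, $d=\gcd(2,m-1)$, whose normalizer in $S$ is dihedral of order $2(m\pm 1)/d$; hence when $q=2$ that normalizer contains an involution inverting $\GEN x$, so $q\to p$ is an arc of $\ngr S\subseteq\ngr G$ (if moreover $q$ divided $(m\pm 1)/d$ the cyclic torus would have an element of order $pq$, which is excluded). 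If $p=r$ and $f=2$, then for unipotent $x$ one computes $\norm S{\GEN x}\cong(C_r\times C_r)\rtimes C_{r-1}$, which has an element of order $q$ since $q\mid r-1$. If $p=r$ and $f=1$, then $\norm S{\GEN x}\cong C_r\rtimes C_{(r-1)/2}$, which has such an element whenever $q$ is odd (there $q\mid r-1$ forces $q\mid(r-1)/2$) or $q=2$ with $r\equiv 1\pmod 4$, and $\norm{\mathrm{PGL}_2(r)}{\GEN x}\cong C_r\rtimes C_{r-1}$ always contains an involution. After these, only two configurations survive: (i) $p\mid m\pm 1$ with $q$ odd, and (ii) $p=r$, $f=1$, $q=2$, $G=\mathrm{PSL}_2(r)$, $r\equiv 3\pmod 4$.

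For (i) I would apply Lemma~\ref{independence} with $\chi$ taken, via the classical character table of $\mathrm{PSL}_2$, to be a principal-series character of degree $m+1$ (if $p\mid m-1$) or a discrete-series character of degree $m-1$ (if $p\mid m+1$), attached to a linear character of order $p$ of the relevant torus. On the order-$p$ torus elements such $\chi$ takes the values $\pm(\zeta^j+\zeta^{-j})$ as $\zeta^j$ ranges over the primitive $p$-th roots of unity up to inversion; so $\Q(\chi(a))=\Q_p^{+}$ has index $2$ in $\Q_p$, coprime to the odd prime $q$ (hypothesis (b)), and the values over the conjugacy classes of elements of order $p$ in $G$ form the period $\Q$-basis $\{\zeta^j+\zeta^{-j}: 1\le j\le (p-1)/2\}$ of $\Q_p^{+}$ (hypothesis (c)); (a) is immediate. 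The delicate point is passing to $G\supsetneq S$: for the principal series one uses an extension of $\chi$ to $G$ (available because $p\mid m-1$ forces $r\equiv\pm 1\pmod p$, so the torus character is stable up to inversion under the diagonal and field automorphisms), whereas for the discrete series with $p\mid m+1$ — where $r$ has order $4$ modulo $p$ — one instead induces an irreducible character of $G$ from the inertia subgroup; then $\chi(a)$ becomes $\pm$ a Gaussian period over the order-$4$ subgroup $\GEN r\le(\Z/p\Z)^{\times}$, so $\Q(\chi(a))$ has index $4$ in $\Q_p$, still coprime to $q$, and the $(p-1)/4$ periods again form a $\Q$-basis of the fixed field. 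I expect this Clifford-theoretic bookkeeping — performed uniformly over all $G$ with $S\le G\le\aut S$, while keeping track of the index of $\Q(\chi(a))$ in $\Q_p$ — to be the main obstacle; everything else is routine. For (ii), the element $x$ of order $p=r$ is non-real in $G=\mathrm{PSL}_2(r)$, which has exactly two conjugacy classes of elements of order $r$, interchanged by inversion, so the imaginary-part computation in the proof of Theorem~\ref{alternating} carries over word for word (alternatively, Lemma~\ref{independence} applies with $\chi$ an exceptional character of degree $(r-1)/2$, whose values $\tfrac{-1\pm\sqrt{-r}}{2}$ generate $\Q(\sqrt{-r})$, of index $(r-1)/2$ — odd, since $r\equiv 3\pmod 4$ — in $\Q_r$). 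In every configuration we reach a contradiction, so $q\to p$ must be an arc of $\ngr G$, and (NPQ) holds for $G$.
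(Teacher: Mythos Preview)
Your approach is correct and essentially the same as the paper's: reduce via (PQ) to a Frobenius subgroup $C_p\rtimes C_q$ with $q\mid p-1$, dispose of most configurations by exhibiting a $q$-element in the normalizer of an order-$p$ subgroup, and settle the two residual cases via Lemma~\ref{independence} with a suitably chosen character. The paper organizes the reduction differently (first listing the arcs already present in $\ngr S$, then isolating the ``missing'' ones as its claims (a) and (b)), but the substance coincides, including the use of the exceptional character $\eta_1$ of degree $(r\pm 1)/2$ for the case $G=S=\mathrm{PSL}_2(r)$, $r\equiv 3\pmod 4$.

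There is one technical slip in your plan for case (i). You take the principal- or discrete-series character attached to a torus character $\alpha$ of order exactly $p$. If $p^2$ divides the order of the relevant torus (for instance $r=197$, $f=1$, $p=7$, $q=3$), then the unique subgroup of order $p$ in the cyclic torus lies inside $\ker\alpha$, so $\chi_\alpha$ takes the constant value $2$ on every element of order $p$ and hypothesis~(c) of Lemma~\ref{independence} fails. The remedy is to choose $\alpha$ of order equal to the full $p$-part $p^a$ of the torus order (or, as the paper does, simply to use $\chi_1$ or $\theta_1$, attached to a faithful torus character); then $\alpha$ is faithful on the elements of order $p$ and you recover the values $\pm(\zeta^j+\zeta^{-j})$. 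Your extension argument for $f=2$ survives this adjustment: since $p$ is odd, the $p$-part of $r^2-1$ equals the $p$-part of whichever of $r\pm 1$ is divisible by $p$, so $r\equiv\pm 1\pmod{p^a}$ and the corrected $\chi_\alpha$ remains $\phi$-invariant. With this fix your treatment of the passage to $G\supsetneq S$ is in fact a bit cleaner than the paper's, which works with the fixed characters $\chi_1,\theta_1$ and must invoke White's results to handle the three overgroups where these particular characters fail to extend.
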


\begin{proof}
Since ${\rm{PSL}}_2(2^2)$ is isomorphic to ${\rm A}_5$, which has been already treated in Theorem~\ref{alternating}, we can assume that $r$ is an odd prime. We start by recalling that the order of $S$ for $r\neq 2$ is ${r^f(r^f-1)(r^f+1)}/2$, and the order of ${\rm{Out}}(S)$ is $2f$; in fact, the automorphism group of $S$ is isomorphic to $S\GEN{\delta, \phi}$, where $\delta$ is a diagonal automorphism such that $\delta^2$ is an inner automorphism, and $\phi$ is a field automorphism of order $f$ (see for instance \cite{White}). Therefore $\pi(G)=\pi(S)=\pi^{-}\cup\pi^{+}\cup\{r\}\cup\{2\}$, where $\pi^{-}$ and  $\pi^{+}$ are the sets of odd prime divisors of $r^f-1$ and of $r^f+1$, respectively. 

By \cite[II.8.27]{Hu}, the group $S$ has dihedral subgroups both of order $r^f-1$ and of order $r^f+1$; also, it has subgroups that are Frobenius groups with an elementary abelian kernel of order $r^f$ and cyclic complements of order $(r^f-1)/2$. In view of this fact, if $r^f\equiv 1\,\mod\, 4$, then $\ngr{S}$ contains the following arcs: $s\leftrightarrows t$ for any choice of $s$ and $t$ in $\{2\}\cup \pi^{-}$ or in $\pi^{+}$, and $2\rightarrow s$ for any $s\in\{r\}\cup\pi^{+}$. Similarly, if $r^f\equiv -1\,\mod\, 4$, then $\ngr{S}$ contains the arcs $s\leftrightarrows t$ for any choice of $s$ and $t$ in $\{2\}\cup \pi^{+}$ or in $\pi^{-}$, and $2\rightarrow s$ for any $s\in\pi^{-}$. Moreover, in both cases $\ngr{S}$ has the arcs $s\rightarrow r$ where $s\in\pi^{-}$ is a divisor of $r-1$. 

Now, all these arcs are clearly in $\ngr G$, and also $2\rightarrow r$ is in $\ngr G$ unless $r^f\equiv -1\,\mod\,4$ and $G=S$; in fact, it is not difficult to check that $G$ has always a (possibly abelian) subgroup of the form $C_r\rtimes C_2$ whenever $G$ is not simple. Furthermore, every double arc in $\ngr{V(\Z G)}$ is also in $\ngr G$ by \cite[Theorem~5.5]{MdR2019} and Remark~\ref{GK}. Since, by obvious arithmetical reasons, $\ngr{V(\Z G)}$ cannot contain arcs of the kind $s\rightarrow r$ for any $s\in\pi^+$, or $s$ in $\pi^-$ not dividing $r-1$, the desired conclusion will be obtained by proving the following properties.
\begin{enumeratei}
\item $\ngr{V(\Z G)}$ does not contain any arc of the form $q\rightarrow p$, for any choice of a pair $(p,q)$ in $\pi^+\times\pi^-$ or in $\pi^-\times\pi^+$.
\item $\ngr{V(\Z G)}$ does not contain the arc $2\rightarrow r$ if $G= S$ and $r^f\equiv -1\,\mod\,4$.
\end{enumeratei}

We work next to show that (a) holds, and we start from the case $G=S$. Assuming for the moment that we have $r^f\equiv 1\,\mod\,4$, we consider first the situation when $p$ lies in $\pi^+$ and $q$ in $\pi^-$. In view of the character table of $S$, as displayed in Table~2 on page~10 of \cite{Hertweck2007}, a set of representatives for the conjugacy classes of elements of order $p$ in $G$ is $$\left\{b^m\mid m\in\Z,\;{\text{gcd}}\left(\frac{r^f+1}{2},m\right)=\frac{r^f+1}{2p}\,{\text{ and }}\, 1\leq m\leq \frac{r^f-1}{4}\right\},$$ where $b\in G$ is a suitable element of order $(r^f+1)/2$. Observe that the two conditions on the integer $m$ yield $m=k\cdot (r^f+1)/2p\,$ for some $k\in\Z$ with $$k\leq\frac{r^f-1}{4}\cdot\frac{2p}{r^f+1}\leq \frac{p}{2},$$ which indeed implies $k\leq(p-1)/2$. Moreover, setting $\sigma$ to be a complex primitive $\frac{r^f+1}{2}$-th root of unity and adopting the notation of the aforementioned table, the value of the irreducible character $\theta_1$ (of degree $r^f-1$) on $b^m$ is $-(\sigma^m+\sigma^{-m})=-(\zeta^k+\zeta^{-k})$, where $\zeta=\sigma^{(r^f+1)/2p}$ is a primitive $p$-th root of unity. 

Since $\{\zeta^k+\zeta^{-k}\mid 1\leq k\leq (p-1)/2\}$ is a basis for $\Q(\zeta+\zeta^{-1})$ over $\Q$, the values of $\theta_1$ on the given set of representatives for the conjugacy classes of elements of order $p$ are linearly independent over $\Q$, and clearly $q$ does not divide $2=|\Q_p:\Q(\theta_1(b^m))|$ for any such representative $b^m$. We are then in a position to apply Lemma~\ref{independence}, obtaining the desired conclusion. 

Still assuming $G=S$ and $r^f\equiv 1\,\mod\,4$, let us consider the case when $p$ lies in $\pi^-$ and $q$ in $\pi^+$. We can apply the same method as above, this time using the irreducible character $\chi_1\in\irr G$ (of degree $r^f+1$) in place of $\theta_1$. The conjugacy classes of elements of order $p$ are now represented by the elements $a^\ell$, where $a\in G$ is a suitable element of order $(r^f-1)/2$, and $\ell$ ranges over all integers that satisfy the following conditions: $${\text{gcd}}\left(\frac{r^f-1}{2},\ell\right)=\frac{r^f-1}{2p},\quad{\text{and}}\quad 1\leq \ell\leq \frac{r^f-1}{4}.$$ As above, this yields $\ell=k\cdot (r^f-1)/2\,$ with $1\leq k\leq (p-1)/2$; denoting by $\rho$ a primitive $\frac{r^f-1}{2}$-th root of unity and setting $\zeta=\rho^{(r^f-1)/2}$, we see that the value of $\chi_1$ on $a^\ell$ is $\zeta^k+\zeta^{-k}$, and the desired conclusion follows by Lemma~\ref{independence}, as above.

A slight variation of our argument so far yields that (a) is true for $G=S$ also when $r^f\equiv -1\,\mod\, 4$, both in the case $p\in\pi^+$, $q\in\pi^-$ and vice-versa. Thus, the claim in (a) is proved when $G=S$.

Let now $G$ be any almost simple group with a socle $S$ isomorphic to ${\rm{PSL}}_2(r^f)$. If the characters $\theta_1$ and $\chi_1$ can be extended to characters $\widehat{\theta_1}$ and $\widehat{\chi_1}$ in $\irr G$, then it is easily seen that we can argue exactly as in the case $G=S$, using $\widehat{\theta_1}$ and $\widehat{\chi_1}$ in place of $\theta_1$ and $\chi_1$. On the other hand, if either $\theta_1$ or $\chi_1$ does not extend to $G$, then Theorem~6.6 and Theorem~6.7 of \cite{White} yield that $G$ is isomorphic to one of the following groups: 
\begin{itemize}
\item $\aut{{\rm{PSL}}_2(r^2)}={\rm{PSL}}_2(r^2)\GEN{\delta,\phi}$, 
\item ${\rm{PSL}}_2(r^2)\GEN\phi$, 
\item ${\rm{PSL}}_2(r^2)\GEN{\delta\phi}$.
\end{itemize}

So, let us define $\xi$ to be either $\theta_1$ or $\chi_1$, depending on whether we are in the case $p\in\pi^+$ and $q\in\pi^-$ or vice-versa, and assume that $\xi$ does not extend to $G$. Let $\psi$ be an irreducible character of $G$ whose restriction to $S={\rm{PSL}}_2(r^2)$ has $\xi$ as a constituent. Since the diagonal automorphism $\delta$ lies in the inertia subgroup of $\xi$ in $\aut{S}$ (see \cite[Page~2]{White}), we have that the restriction of $\psi$ to $S$ splits as $\xi+\xi^\phi$. Now, the automorphism $\phi$ induces a permutation on the set $\mathcal{C}=\{C_1,\ldots, C_t\}$ of $S$-conjugacy classes of elements of order $p$, where the class $x^S\in\mathcal{C}$ is mapped to $(x^r)^S$, and we consider a set $\mathcal{F}=\{C_1,\ldots,C_s\}$ of representatives for the orbits of this action. Note that, since $\delta$ fixes all the elements of $\mathcal{C}$, a set $\{x_1,\ldots, x_s\}$ of representatives for the $S$-classes in $\mathcal{F}$ is also a set of representatives for the $G$-conjugacy classes of elements of order $p$. 

We claim that the complex numbers $\psi(x_1),\ldots,\psi(x_s)$ are linearly independent over $\Q$. In fact, assume that we have $$\sum_{j=1}^s q_j\psi(x_j)=0,$$ where $q_j$ is a rational number for every $j\in\{1,\ldots, s\}$. Then we get $$0=\sum_{j=1}^sq_j(\xi(x_j)+\xi(x_j^r)),$$ and this is a linear combination with rational coefficients of values of $\xi$ on a set of representatives for the $S$-conjugacy classes in $\mathcal{C}$ (indeed, if $p$ lies in $\pi^-$ then we have $\xi(x_j)=\xi(x_j^r)$ for every $j\in\{1,\ldots s\}$, whereas if $p$ lies in $\pi^+$ then $\xi(x_j)\ne \xi(x_j^r)$ for every $j\in\{1,\ldots s\}$). Since we know these values are linearly independent over $\Q$, it easily follows that $q_j=0$ for all $j\in\{1,\ldots, s\}$. Moreover, for every $j\in\{1,\dots,s\}$, we have that $q$ does not divide $|\Q_p:\Q(\psi(x_j)+\psi(x_j^r))|$ (which is either $2$ or $4$, depending on whether $p$ is in $\pi^-$ or in $\pi^+$, respectively), and again we are done by Lemma~\ref{independence}. This concludes the proof of the claim in~(a).

\smallskip
As for the claim in (b), still referring to Table 2 of \cite{Hertweck2007}, we consider the conjugacy classes of $G$ containing elements of order $r$: there are two of them, represented by elements denoted by $c$ and $d$. Also, we consider the irreducible character $\eta_1$ of $G$. We have $\eta_1(c)=-\frac{1}{2}+\frac{1}{2}i\sqrt{r^f}$ and $\eta_1(d)={\overline{\eta_1(c)}}$, so these values are linearly independent over $\Q$. Moreover, $|\Q(\eta_1(c)):\Q|$ is clearly $2$, hence $|\Q_r:\Q(\eta_1(c))|=(r-1)/2$ is not divisible by $2$ (recall that we are assuming $r^f\equiv -1\,\mod\, 4$), and the same holds for $|\Q_r:\Q(\eta_1(d))|$. Another application of Lemma~\ref{independence} yields that $\ngr{V(\Z(G))}$ does not contain the arc $2\rightarrow r$, and the proof is complete. 
\end{proof}

\section{Metacyclic subgroups of solvable groups: Theorem~\ref{SIPsolvable} and Theorem~\ref{SIPmetacyclic}}

We work next toward an improvement of Corollary~\ref{NPQsolvable} in the spirit of the Subgroup Isomorphism Problem, as described in the introduction. 
We will first prove that if $G$ is a finite solvable group and $V(\Z G)$ has a subgroup isomorphic to a Frobenius group of the form $C_p\rtimes C_{q^k}$, then so does $G$ (Theorem~\ref{SIPsolvable} of the introduction). 

We start with a preliminary result. Observe that the isomorphism type of a group of the kind $C_p\rtimes C_{q^k}$ is uniquely determined by the order of its center, and also by the order of the kernel of the action of $C_{q^k}$ on $C_p$ which defines the semidirect product. 

\begin{lemma}\label{christmaseve} Let $p,q$ be distinct primes, $H$ a finite group and $N$ a normal elementary abelian $q$-subgroup of $H$. Assume that, for a positive integer $k$, the factor group $H/N$ is a Frobenius group of the form $C_p\rtimes C_{q^k}$, and assume that $b\in H$ is an element of order $q^k$ with $\GEN b\cap N=1$. Then there exists a Sylow $p$-subgroup $P$ of $H$ such that $b$ lies in $\norm H P$.
\end{lemma}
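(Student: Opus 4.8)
The plan is to translate the conclusion into a fixed-point statement for an affine action of $\langle b\rangle$ on an $\F_q$-vector space. Let $K\trianglelefteq H$ be the preimage of the Frobenius kernel of $H/N$, so $|K|=p|N|$ and $N$ is the (unique, hence normal) Sylow $q$-subgroup of $K$; by Schur--Zassenhaus, $K=N\rtimes P_0$ with $P_0\cong C_p$. Since $K/N$ is the kernel of the Frobenius group $H/N$, every Sylow $p$-subgroup of $H$ lies in $K$ and is a Sylow $p$-subgroup of $K$, so $\mathrm{Syl}_p(H)=\mathrm{Syl}_p(K)$; by coprimality these are exactly the $N$-conjugates of $P_0$, and $N_N(P_0)=C_N(P_0)$. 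Hence $n\mapsto P_0^{\,n}$ induces a bijection $\bar N:=N/C_N(P_0)\to\mathrm{Syl}_p(H)$, and a direct computation shows that conjugation by $b$ corresponds on $\bar N$ to the affine map $\alpha(\bar n)=\bar\beta(\bar n)+\bar m$, where $\bar\beta$ is induced by conjugation by $b$ and $\bar m$ is read off from $bP_0b^{-1}=P_0^{\,m}$. So the lemma is equivalent to the assertion that $\alpha$ has a fixed point, i.e. that $\bar m\in\mathrm{im}(\bar\beta-\mathrm{id})$.

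Two facts will combine to yield this. First, $b^{q^k}=1$ forces $\alpha^{q^k}=\mathrm{id}$, and since $1+t+\dots+t^{q^k-1}=(t-1)^{q^k-1}$ in $\F_q[t]$ (because $t^{q^k}-1=(t-1)^{q^k}$ in characteristic $q$), this gives $(\bar\beta-\mathrm{id})^{q^k-1}\bar m=0$. Second, I claim that $\bar N$ is a \emph{free} module over the local ring $R:=\F_q[t]/(t^{q^k}-1)=\F_q[t]/\big((t-1)^{q^k}\big)$, where $t$ acts as $\bar\beta$; equivalently, the unipotent operator $\bar\beta$ has all its Jordan blocks of size exactly $q^k$. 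Granting the claim, in $R$ one checks at once that $\ker(t-1)^{q^k-1}=\mathrm{im}(t-1)$, hence $\ker(\bar\beta-\mathrm{id})^{q^k-1}=\mathrm{im}(\bar\beta-\mathrm{id})$ on $\bar N$, and the two facts force $\bar m\in\mathrm{im}(\bar\beta-\mathrm{id})$, finishing the proof.

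To prove the freeness claim I would pass to a conjugate of $b$ normalising $P_0$. Counting shows the number of Sylow $p$-subgroups of $H$ equals $|N:C_N(P_0)|$, whence $|N_H(P_0)|=pq^k|C_N(P_0)|$ and $N_H(P_0)N=H$; so there is $b'\in N_H(P_0)$ with $b'N=bN$, say $b'=n_0b$ with $n_0\in N$. Identify $\bar N$ with the $\langle b\rangle$-invariant complement $W:=[N,P_0]$ of $C_N(P_0)$ in $N$ (coprime decomposition); on $W$, conjugation by $b'$ and by $b$ agree, since they differ by $n_0\in N$, which acts trivially on $W\le N$. Now $b'$ normalises $P_0$, and via the $b'$-equivariant isomorphism of $P_0$ with the Frobenius kernel $P_0N/N$ of $H/N$ --- on which the complement $\langle bN\rangle\cong C_{q^k}$ acts fixed-point-freely --- it follows that $b'$ acts on $P_0$ with order exactly $q^k$ and without nontrivial fixed points. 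Consequently $b'$ permutes the $p-1$ nontrivial $P_0$-weight spaces of $W\otimes\overline{\F_q}$ (there is no trivial weight, as $C_W(P_0)=0$) in orbits of length $q^k$; since $b'^{q^k}\in N$ acts trivially on $W$, the cyclic permutation on each orbit of weight spaces has identity return map, hence contributes only Jordan blocks of size $q^k$. This establishes the claim.

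The crux --- and the only genuine difficulty --- is this last step: $b$ itself need not normalise any Sylow $p$-subgroup of $H$, so one must first move to the conjugate $b'\in N_H(P_0)$ and then transport the fixed-point-free action of the Frobenius complement $\langle bN\rangle$ on the kernel $P_0N/N$ back to an action on $P_0$, from which the uniform Jordan block size $q^k$ (equivalently, the freeness of $\bar N$ over $\F_q[C_{q^k}]$) follows. The remaining ingredients --- Schur--Zassenhaus, the coprime splitting $N=C_N(P_0)\oplus[N,P_0]$, the Frattini-type identification of $\mathrm{Syl}_p(H)$, and the elementary linear algebra of unipotent operators in characteristic $q$ --- are routine.
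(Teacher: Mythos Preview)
Your proof is correct and takes a genuinely different route from the paper's, though both ultimately rest on the same structural fact: the conjugation action of (a suitable $N$-translate of) $b$ on $W=[N,P_0]$ has all Jordan blocks of size exactly $q^k$, i.e.\ $W$ is free over $\F_q[C_{q^k}]$. The paper first reduces by minimal counterexample to $N=W$, so that $H$ is a $2$-Frobenius group, and obtains the uniform block size from Theorem~15.16 of \cite{Is} (which gives $|W|=|\cent W y|^{q^k}$ for every $y$ of order $q^k$ outside $W$) together with the obvious upper bound on block sizes; it then concludes by a double count, computing separately the number of elements of order $q^k$ that normalise some Sylow $p$-subgroup and the total number of elements $z$ of order $q^k$ with $\GEN{z}\cap W=1$, the latter via explicit unitriangular matrices and the ``Christmas Stocking'' identity $\sum_{j=i}^{q^k-1}\binom{j}{i}=\binom{q^k}{i+1}$. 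You instead recast the conclusion as an affine fixed-point problem on $\bar N$, prove freeness directly by decomposing $W\otimes\overline{\F_q}$ into $P_0$-weight spaces (the Frobenius action forces $b'$ to permute the nontrivial weights in free orbits of length $q^k$ with identity return map, so each orbit contributes only $q^k$-dimensional Jordan blocks), and finish with the module-theoretic observation that on a free $\F_q[C_{q^k}]$-module the kernel of $(\bar\beta-\mathrm{id})^{q^k-1}$ equals the image of $\bar\beta-\mathrm{id}$. Your approach is more conceptual and self-contained, avoiding both the external citation and the combinatorial counting; the paper's stays over $\F_q$ throughout. One small step you leave implicit is that $C_N(P_0)$ is $b$-invariant, so that $\bar\beta$ is well defined on $\bar N$; this follows at once from your own remark that $b$ and $b'=n_0b$ act identically on the abelian group $N$.
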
   

\begin{proof} Let $H$ be a minimal counterexample to the statement, and let $P=\GEN a$ be a Sylow $p$-subgroup of $H$. Observe that, by the Frattini argument, we have $H=N\norm H P$ and, by \cite[Theorem~5.2.3]{Gor}, $N=[N,P]\times \cent N P$; therefore, setting $M=[N,P]$ and $C=\cent N P$, we have $H=MC\norm H P=M\norm H P$. In particular, $M$ and $MP$ are  normal subgroups of $H$, because they are normalized by both $M$ and $\norm H P$. Since $M\cap\norm H P=\norm M P=\cent M P=M\cap C=1$, we see that $\norm H P$ is in fact a complement for $M$ in $H$. 

Now, set $H_0=MP\GEN b$; since $aM$ and $b^{q^{k-1}}M$ are elements of $H_0/M$ that do not commute because even $[aN,b^{q^{k-1}}N]$ is nontrivial, we have that $H_0/M$ is isomorphic to the Frobenius group $C_p\rtimes C_{q^k}$. In fact, $H_0$ is a group which satisfies our assumptions (with respect to $M$ and the same element $b$ of order $q^k$), and if the desired conclusion holds for $H_0$ then it clearly holds also for $H$. In other words, by the minimality of $H$ as a counterexample, we must have $H_0=H$. Note also that $M$ is nontrivial, as otherwise $H$ would not be a counterexample, and so $H$ is a $2$-Frobenius group.  

As the next step, we will count the elements of order $q^k$ in $H$ that normalize some Sylow $p$-subgroup of $H$. Note that, if $y$ is such an element, then $y^{q^{k-1}}$ normalizes a Sylow $p$-subgroup of $H$. Since, by the first paragraph of this proof, the normalizer in $M$ of any Sylow $p$-subgroup of $H$ is trivial, it follows that $\GEN y\cap M=1$. Observe that every Sylow $p$-subgroup of $H$ is contained in $MP$ and, again because $MP$ is a Frobenius group with complement $P$, we get $|\syl p H|=|M|$. In fact, we have $\syl p H=\{P^m\mid m\in M\}$. It follows that the normalizer of every Sylow $p$-subgroup of $H$ is isomorphic to the Frobenius group $C_p\rtimes C_{q^k}$, and therefore the number of subgroups of order $q^k$ of $H$ which normalize a given Sylow $p$-subgroup of $H$ is exactly $p$. We also observe that, by Theorem~15.16 of \cite{Is}, for every element $y\in H\setminus M$ having order $q^k$ we get $|M|=|\cent M y|^{q^k}$; in particular, $\ell=|\cent M y|$ does not depend on the choice of $y$ among the elements of order $q^k$ in $H\setminus M$. Now, we claim that each of these elements of order $q^k$ normalizes precisely $\ell$ distinct Sylow $p$-subgroups of $H$. In fact, assume that $y\in H$ of order $q^k$ normalizes $P_0\in\syl p H$. If, for $m\in M$, we have $y\in\norm H{P_0^m}$, then $[m^{-1},y^{-1}]$ lies in $\norm H{P_0}y^{-1}\cap M=\norm H{P_0}\cap M=1$ and so $m$ lies in $\cent M y$. On the other hand, it is clear that if $y$ centralizes $m$ then $y$ normalizes $P_0^m$, and our claim follows. 

We are ready to conclude the count mentioned in the previous paragraph. Denoting by $\mathcal{C}$ the set of subgroups of order $q^k$ in $H$ that normalize some Sylow $p$-subgroup of $H$, consider the bipartite graph whose vertex set is $\syl p H\cup\mathcal{C}$ (with $\syl p H$ and $\mathcal{C}$ being the two parts of the graph), and where the vertices $P_0\in\syl p H$ and $Q_0\in\mathcal{C}$ are adjacent if and only if $Q_0$ normalizes $P_0$. By the discussion above, the number of edges in this graph can be computed both as $|M|p$ (which is the number of vertices in the part $\syl p H$ multiplied by the number of neighbors of each vertex in $\syl p H$) and $|\mathcal{C}|\ell$ (obtained similarly, from the point of view of the part $\mathcal{C}$). So, we have $|\mathcal{C}|=|M|p/\ell=\ell^{q^k-1}p$. As each subgroup of order $q^k$ contains $q^{k-1}(q-1)$ elements of order $q^k$, we conclude that the number of elements of order $q^k$ in $H$ which normalize some Sylow $p$-subgroup of $H$ is given by $$|\mathcal{C}|q^{k-1}(q-1)=\ell^{q^k-1}pq^{k-1}(q-1).$$

Our aim is then to show that the number above coincides with the total number of elements $y$ of order $q^k$ in $H$ with $\GEN y\cap M=1$. So, let $y$ be such an element: observe that $y$ acts as an automorphism of order $q^k$ on the ${\rm{GF}}(q)$-vector space $M$, whose dimension we denote by~$t$. Hence, up to conjugacy in ${\rm{GL}}_t(q)$, the action of $y$ is represented by an upper unitriangular matrix $Y$ in the canonical Jordan form. It is easily seen that the dimension of $\cent M y$ over ${\rm{GF}}(q)$ coincides with the number of Jordan blocks of $Y$. Denote by $I_d$, $J_d$ the $d$-dimensional identity matrix and the Jordan block with eigenvalue $1$ respectively, and set $E=J_d-I_d$.
Then for a positive integer $h$ we get $$(J_d)^h=\sum_{i=0}^{d-1} {{h}\choose{i}} E^i$$ and, in particular, we see that $(J_d)^{q^k}$ contains the summand $E^{q^k}$ if $d>q^k$. Since in our situation $(J_d)^{q^k}$ must be the identity matrix $I_d$ for every Jordan block of $Y$, we conclude that $d$ is at most $q^k$ for every block of $Y$. But now, recalling that $t=\dim M$ equals $q^k\dim{\cent M y}$ and that $\dim{\cent M y}$ is the number of Jordan blocks of $Y$, we conclude that the dimension of each block is precisely $q^k$.

Finally, for $m\in M$ and $r\in \Z$, consider the element $my^{-r}$ of $H$. Observe that $(my^{-r})^{q^{k-1}}=m\cdot m^{y^r}\cdot m^{y^{2r}}\cdots m^{y^{(q^{k-1}-1)r}}\cdot y^{q^{k-1}r}$. So, if $r$ is a multiple of $q$, we see that $(my^{-r})^{q^{k-1}}$ lies in $M$ and hence $my^{-r}$ is not an element of order $q^k$ with $\GEN {my^{-r}}\cap M=1$. On the other hand, if $r$ is coprime to $q$, then $(my^{-r})^{q^{k-1}}$ cannot lie in $M$, as otherwise the same would hold for $y^{q^{k-1}r}$ and hence for $y^{q^{k-1}}$. As a consequence, if we want to count the elements $z$ of order $q^k$ lying in $M\gen y\in\syl q H$ and such that $\GEN z\cap M=1$, then we have to count the elements of order $q^k$ having the form $my^{-r}$ where $m\in M$ and $r$ is coprime to $q$. We start by focusing on the element $my^{-1}$.

With the methods described in the paragraph above, we see that the order of $my^{-1}$ is $q^k$ if and only if, in an additive notation, we have $$m+Ym+Y^2m+\cdots +Y^{q^k-1}m=0.$$ In other words, $my^{-1}$ has order $q^k$ if and only if $m$ lies in the kernel of the endomorphism of $M$ represented by the matrix $I_t+Y+Y^2+\cdots + Y^{q^k-1}$. Now, it can be computed that  $$I_{q^k}+J_{q^k}+(J_{q^k})^2+\cdots +(J_{q^k})^{q^k-1}=\sum_{i=0}^{q^k-1}\left(\sum_{j=i}^{q^k-1}{j\choose i}\right)E^i,$$ and the $(r,s)$-entry of this matrix depends only on $i=s-r$ (where $1\leq r\leq s\leq q^k$) . Since we have $$\sum_{j=i}^{q^k-1}{j\choose i}={q^k\choose {i+1}}$$ (this equality, not difficult to check, is sometimes referred to as the Christmas Stocking Theorem), it is then clear that $I_t+Y+Y^2+\cdots + Y^{q^k-1}$ is a block-diagonal matrix whose blocks are all equal to the $q^k$-dimensional matrix having all zero entries except the top-right entry, which is $1$. The kernel of the endomorphism associated with this matrix has then a dimension equal to $t-\dim\cent M y$, and so it contains $\ell^{q^k-1}$ elements.

Taking into account that the above calculation yields the same result if we replace $y^{-1}$ by any generator of $\GEN y$, we conclude that the number of elements $z$ of order $q^k$ lying in the Sylow $q$-subgroup $M\GEN y$ of $H$ and with $\GEN z\cap M=1$ is $\ell^{q^k-1}q^{k-1}(q-1)$. Multiplying this result by the number of Sylow $q$-subgroups of $H$, we obtain a total of $\ell^{q^k-1}pq^{k-1}(q-1)$ elements, including our element $b$. The desired conclusion now follows because this number coincides with the number of elements of order $q^k$ in $H$ that normalize some Sylow $p$-subgroup of $H$, and the proof is complete.
\end{proof}

We are ready to prove a slightly stronger form of Theorem~\ref{SIPsolvable}. 

\begin{theorem}\label{SolvableCpCq}
Let $G$ be a finite solvable group, let $p, q$ be distinct primes and let $k$ be a positive integer. Assume that $V(\Z G)$ has two elements $u$, $v$ such that $|u|=p$, $|v|=q^k$, and the group $T$ generated by $u$ and $v$ is isomorphic to the Frobenius group $C_{p}\rtimes C_{q^k}$. Then there exist $a, b \in G$ such that $|a|=p$, $|b|=q^k$, $\langle a,b\rangle\cong T$ and $\varepsilon_b(v)\neq 0$.
\end{theorem}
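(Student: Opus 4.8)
The plan is to run an induction on $|G|$, exploiting the fact that a Frobenius group of the shape $C_p\rtimes C_{q^k}$ is determined (up to isomorphism) by $|T|$, so it suffices to locate \emph{any} subgroup of $G$ isomorphic to $T$ with the extra control $\varepsilon_b(v)\ne 0$. First I would observe that, by the Berman--Higman Theorem together with \cite[Theorem~2.3]{Hertweck2007}, only elements of order $q^k$ (or its divisors) can have nonzero partial augmentation on $v$, and since $\sum_{g}\varepsilon_g(v)=1$ there exists $b_0\in G$ of order $q^k$ with $\varepsilon_{b_0}(v)\ne 0$; the real task is to show $b_0$ (or a conjugate of it) sits in a subgroup isomorphic to $T$. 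Let $M$ be a minimal normal subgroup of $G$, hence an elementary abelian $s$-group for some prime $s$, and let $\varphi\colon V(\Z G)\to V(\Z(G/M))$ be the natural homomorphism. If $p\nmid|M|$ then $T\cap\ker\varphi=1$ by \cite[Lemma~7.5]{Seh93}, so $\varphi(T)\cong T$; by induction $G/M$ contains $\bar a,\bar b$ with $\langle\bar a,\bar b\rangle\cong T$ and $\varepsilon_{\bar b}(\varphi(v))\ne0$. Pulling back, I would take the full preimage $H\le G$ of $\langle\bar a,\bar b\rangle$ and aim to find the desired copy of $T$ inside $H$, reducing to the situation where $G$ itself is (close to) a $2$-Frobenius group with a normal $q$-layer.

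The heart of the argument is the case $s=q$, i.e.\ $M$ a minimal normal $q$-subgroup with $p\nmid|M|$. Here $H/M\cong C_p\rtimes C_{q^k}$ is Frobenius, and I have an element $b$ of $H$ of order $q^k$ with $\varepsilon_b(v)\ne 0$; to apply induction's conclusion I need $\langle b\rangle\cap M=1$ and a Sylow $p$-subgroup $P$ of $H$ normalized by $b$, which is \emph{exactly} the content of Lemma~\ref{christmaseve}. The subtlety is that a priori $\varphi^{-1}(\bar b)$ need only have order $q^k$ in $G/M$; I would argue that among the preimages of $\bar b$ one may choose one of order $q^k$ meeting $M$ trivially and still carrying nonzero partial augmentation — here one uses that $\varepsilon_b(v)$ is determined by the $G$-class of $b$, that all elements of $H$ of order $q^k$ outside $M$ that project to $\bar b^{\,G/M}$ form a controlled set, and that the Berman--Higman restriction forces $\varepsilon_g(v)=0$ for $g$ of order dividing $q^{k-1}$, so the "mass" $\varepsilon_{b_0}(v)$ cannot escape into $M$-cosets of smaller order. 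Once $\langle a,b\rangle\cong T$ is produced inside $H$ with $b$ normalizing a Sylow $p$-subgroup and $\varepsilon_b(v)\ne0$, we are done by Lemma~\ref{christmaseve} applied to $H$.

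The remaining case $p\mid|M|$ forces $M$ to be an elementary abelian $p$-group (as $M$ is a chief factor of the solvable group $G$), and now I would work directly: $v$ acts via $\varphi$ trivially on nothing forced, but one analyzes the action of a chosen $b$ of order $q^k$ on $M$ as a ${\rm GF}(p)\langle b\rangle$-module. Using coprimality ($q\ne p$) and the fact that $q^k\mid p^{\,e}-1$ for the relevant module dimension, Lemma~\ref{ActionNT} supplies a cyclic $\langle b\rangle$-invariant subgroup $\langle a\rangle$ of order $p$ in $M$ on which $b$ acts with full order $q^k$, hence $\langle a,b\rangle\cong C_p\rtimes C_{q^k}=T$; the partial-augmentation bookkeeping transfers because $\varepsilon_b(v)$ is computed in $G$ and $b\in G$ is unchanged. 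The main obstacle, as indicated, is the bookkeeping in the $s=q$ case — guaranteeing that the inductive hypothesis on $G/M$ returns an element $\bar b$ whose preimage can be taken of order $q^k$, trivially intersecting $M$, \emph{and} of nonzero partial augmentation simultaneously — and this is precisely why the combinatorial counting in Lemma~\ref{christmaseve} (matching the number of order-$q^k$ elements normalizing a Sylow $p$-subgroup with the total count of order-$q^k$ elements meeting $M$ trivially) is needed rather than a soft Frattini-type argument.
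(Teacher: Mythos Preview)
Your overall strategy matches the paper's: run a minimal counterexample, pass to $G/M$ when there is a minimal normal $p'$-subgroup $M$, invoke Lemma~\ref{christmaseve} in the $s=q$ case and Schur-Zassenhaus when $s\notin\{p,q\}$, and handle the remaining case via Lemma~\ref{ActionNT}. Two points need correction.

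First, the bookkeeping in the $s=q$ step: your claim that ``the Berman-Higman restriction forces $\varepsilon_g(v)=0$ for $g$ of order dividing $q^{k-1}$'' is not what Berman-Higman says (that result only gives $\varepsilon_1(v)=0$), and \cite[Theorem~2.3]{Hertweck2007} only yields $|g|\mid |v|$, not equality. The paper's argument is cleaner and sidesteps this entirely: from $\varepsilon_{\bar b}(\varphi(v))=\sum_{g:\,gM\sim\bar b}\varepsilon_g(v)\ne 0$ one picks $b\in G$ with $\varepsilon_b(v)\ne 0$ and $bM$ conjugate to $\bar b$ in $G/M$; then $|bM|=q^k$ forces $|b|\ge q^k$, while \cite[Theorem~2.3]{Hertweck2007} gives $|b|\mid q^k$, so $|b|=q^k$ and $\langle b\rangle\cap M=1$ follow automatically. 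No mass-counting is needed.

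Second, and more substantively, your treatment of the case $p\mid|M|$ has a genuine gap. You apply Lemma~\ref{ActionNT} to the action of $b$ on a single minimal normal $p$-subgroup $M$, but that lemma requires $b$ to act on $M$ as an automorphism of order $q^k$, and nothing guarantees this for one particular $M$ (for instance $b$ could centralise that $M$ while acting faithfully on another minimal normal subgroup). The paper's route is: once \emph{every} minimal normal subgroup is a $p$-group, the Fitting subgroup $\fit G$ is itself a $p$-group; by solvability $\cent G{\fit G}=\zent{\fit G}$, so the element $b$ of order $q^k$ (coprime to $p$) acts faithfully on $\fit G$. Now Lemma~\ref{ActionNT} applied to $P=\fit G$ (using that $q^k\mid p-1$, which holds because $T$ is Frobenius) produces the desired $a$. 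Replacing your $M$ by $\fit G$ in the final paragraph closes the gap.
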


\begin{proof}
Let $G$ be a counterexample of minimal order. Assume that the Fitting subgroup $\fit G$ of $G$ is not a $p$-group. Then we can take a minimal normal subgroup $N$ of $G$ which is an elementary abelian $r$-group for some prime $r\ne p$. Denoting by $\varphi:V\to V(\Z (G/N))$ the natural homomorphism, we get $T\cap \ker\varphi=1$ and hence $\varphi(T)\cong T$. By our minimality assumption, there exist $x,y \in G$ such that $|\varphi(x)|=p$, $|\varphi(y)|=q^k$, $\GEN{\varphi(x),\varphi(y)}\cong T$, and $\varepsilon_{\varphi(y)}(\varphi(v))\ne 0$.

Since
	\[\varepsilon_{\varphi(y)}(\varphi(v))= \sum_{g\in G\;:\;\varphi(g)\sim \varphi(y)} \varepsilon_g(v),\]
we can find an element $b\in G$ such that $\varepsilon_{b}(v)\ne 0$ and $bN$ is conjugate to $yN$ in $G/N$. Note that, in particular, $|b|$ is a multiple of $q^k$, but in fact we have $|b|=q^k$ by \cite[Theorem~2.3]{Hertweck2007}. Now, let $g\in G$ be such that $bN=(yN)^{gN}$. Clearly $bN$ normalizes $\GEN{x^gN}$, hence $x^g,b\in G$ satisfy the following properties: $|x^g|=p$, $|b|=q^k$, the subgroup $H=\GEN{N,x^g,b}$ is such that $H/N\cong C_p\rtimes C_q^k$ is a Frobenius group, and $\varepsilon_{b}(v)\ne 0$.

If $r\ne q$, then, by the Schur-Zassenhaus Theorem, $N$ is complemented in $H$ by a Hall $\{p,q\}$-subgroup $H_0$, and we can choose $H_0$ so that $b\in H_0\cong H/N\cong T$. Now, if $a$ is an element of order $p$ in $H_0$, then the elements $a$ and $b$ satisfy the conclusions of the statement and $G$ is not a counterexample. On the other hand, if $r=q$, then we are in a position to apply Lemma~\ref{christmaseve} to the subgroup $H$ (note that $\GEN b\cap N=1$ because $|b|=|bN|=q^k$), and again $G$ is not a counterexample. 

In view of the above discussion, we conclude that $\fit G$ must be a $p$-group. By the main theorem in \cite{HertweckOrdersSolvable}, we can choose an element $b\in G$ such that $|b|=q^k$ and $\varepsilon_b(v)\neq 0$; moreover, $b$ acts as an automorphism of order $q^k$ on $\fit G$, because $G$ is solvable and therefore $\cent G{\fit G}=\zent{\fit{G}}$. But then we may apply Lemma~\ref{ActionNT}, and again it turns out that $G$ is not a counterexample. This is the final contradiction that completes the proof. 
\end{proof}

Note that, by \cite{HertweckOrdersSolvable}, if $G$ is solvable and $V(\Z G)$ has a subgroup isomorphic to $C_p\times C_{q^k}$, then so does $G$. This, together with the previous result, solves the Subgroup Isomorphism Problem for subgroups of the form $C_p\rtimes C_{q^k}$ in the two extreme situations when the action of $C_{q^k}$ on $C_p$ is trivial or faithful. It is then natural to consider the following:

\bigskip
\noindent{\bf{Question}}. Let $G$ be a finite solvable group, and assume that $V(\Z G)$ has a subgroup $T$ of the form $C_{p^h}\rtimes C_{q^k}$. Is it true that $G$ has a subgroup isomorphic to $T$?

\bigskip
We provide next a positive answer to this question under the stronger assumption that the group $G$ has a cyclic derived subgroup. Before stating it, we observe what follows.

\begin{remark}\label{ExponentG'}
Let $G$ be a finite group and let $V=V(\Z G)$. We observe that, if an element $v\in V'$ is conjugate in $\Q G$ to an element $g$ of $G$, then $g$ lies in fact in $G'$. 

Indeed, consider the natural algebra homomorphism $\varphi: \Q G\rightarrow \Q(G/G')$ and its restriction to a group homomorphism of $V(\Q G)$ to $V(\Q(G/G'))$. Since $V(\Q(G/G'))$ is an abelian group, $V'$ is contained in the kernel of this group homomorphism; in particular $\varphi(v)=\varphi(1)$, and it easily follows that $v-1$ lies in the kernel of the algebra homomorphism $\varphi$. As this kernel is an ideal, also $g-1$ lies in the kernel of $\varphi$, i.e., $\varphi(g)-\varphi(1)=0$ and therefore $g\in G'$.
\end{remark}

\begin{theorem}\label{isomorphicfaithful}
	Let $G$ be a finite group such that $G'$ is cyclic, and let $p, q$ be distinct primes. Let $u$, $v$ be elements of $V$ such that $|u|=p^h$, $|v|=q^k$ (where h and k are positive integers) and $T=\langle u,v\rangle\cong C_{p^h}\rtimes C_{q^k}$ is nonabelian. Then there exist $a \in G'$ and $b \in G$ such that $\langle a,b\rangle\cong T$, where $b$ is conjugate to $v$ in $\Q G$.
\end{theorem}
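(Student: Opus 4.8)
The plan is to reduce to the case where $G'$ is a $p$-group using the natural homomorphism, and then analyze the structure of a metacyclic group with cyclic derived subgroup directly.

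First I would observe that since $G'$ is cyclic, $G$ is metacyclic and in particular supersolvable; in this situation the known results on torsion units are much stronger. The key starting point is that, by Hertweck's work on solvable groups (\cite{HertweckOrdersSolvable}) applied to $G$, the element $v$ of order $q^k$ in $V(\Z G)$ is rationally conjugate to an element $b\in G$ of order $q^k$ (in fact for groups with cyclic commutator subgroup one expects the First Zassenhaus Conjecture, or at least enough of it, to hold — one should cite the relevant literature on metabelian or metacyclic groups here). Likewise $u$ is conjugate in $\Q G$ to an element of $G$ of order $p^h$. So after replacing $T$ by a $\Q G$-conjugate we may assume $v=b\in G$, and we must locate a suitable $a$.

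The next step is to split $G'$ according to the primes $p$ and $q$. Write $G'=A\times B\times C$ where $A$ is the $p$-part, $B$ the $q$-part, and $C$ the $p'q'$-part of the cyclic group $G'$; each is characteristic in $G$, hence normal in $G$. Pass to $\bar G=G/(B\times C)$ via the natural homomorphism $\varphi$. Since $T$ is a $\{p,q\}$-group and $T'\cong C_{p^h}$ is a $p$-group contained in $V'$, Remark~\ref{ExponentG'} tells us that the torsion subgroup $T'$ maps into $\overline{G}'=A$ — more precisely one needs that $\varphi$ is injective on $T$, which follows because $\ker\varphi$ is a $p'$-group (it corresponds to $B\times C$ together with possibly a Hall $p'$-part issue) so $T\cap\ker\varphi=1$ by \cite[Lemma~7.5]{Seh93} once one checks $T$ is a $\{p,q\}$-group and the $q$-part is handled; the delicate point is that $B$ is a $q$-group, so one cannot immediately discard it. To deal with $B$ one uses that $v$ acts on $G'$ and that in $\bar G=G/C$ the Sylow structure is controlled: a Frobenius-type or coprime-action argument (Lemma~\ref{ActionNT} style) shows that the faithful action of $\GEN v$ on the $p$-group $A$ forces a cyclic $\GEN v$-invariant subgroup of order $p^h$ in $A$, on which $\GEN b$ acts faithfully. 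This produces $a\in A\le G'$ with $\GEN{a,b}\cong C_{p^h}\rtimes C_{q^k}\cong T$.

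The main obstacle I anticipate is precisely the interaction between the $q$-part $B$ of $G'$ and the element $v$: one must rule out the possibility that the ``$p^h$'' in $T$ really comes from a $p$-element of $G$ that does \emph{not} lie in $G'$, or that the action data (order of the center of $T$, equivalently the order of the kernel of $C_{q^k}\curvearrowright C_{p^h}$) is not realized inside $G$. Here the hypothesis that $G'$ is cyclic is essential: a cyclic $p$-group has a unique subgroup of each order and $\Aut$ of it is cyclic, so the action of $\GEN b$ on $A$ factors through a cyclic quotient and its kernel is forced, matching the abstract kernel in $T$. One then reconciles this with the partial-augmentation bookkeeping: by the Berman–Higman theorem and \cite[Theorem~2.3]{Hertweck2007} the element $u$ has support (in the sense of nonzero partial augmentations) only on elements of order $p^h$, and by Remark~\ref{ExponentG'} applied to powers of $u$ lying in $V'$ these must be conjugate to elements of $G'$; tracking which $G'$-classes occur and combining with $\sum_a\varepsilon_a(u)=1$ pins down a genuine element $a\in G'$ of order $p^h$ with $\GEN{a,b}\cong T$. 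Finally one checks that $b$ was only modified by a $\Q G$-conjugation, so the stated conclusion ($b$ conjugate to $v$ in $\Q G$, $a\in G'$) holds.
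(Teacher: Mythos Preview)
Your outline has the right ingredients but two genuine gaps. The reduction to $G'$ a $p$-group cannot be done by quotienting out $B\times C$ at once: \cite[Lemma~7.5]{Seh93} gives $T\cap\ker\varphi=1$ only for normal subgroups of order coprime to $|T|=p^hq^k$, so the $q$-part $B$ cannot be discarded this way, as you yourself flag but do not resolve. The paper instead removes one prime $r\ne p$ at a time from $G'$ via a minimal-counterexample induction, and when $r=q$ uses that $\GEN{N,a}=N\times\GEN a$ (since $q<p$) together with $|b|=|bN|$ to rebuild $\GEN{a,b}\cong T$ inside $G$. (Also, \cite{HertweckOrdersSolvable} gives only that orders of torsion units match those in $G$, not rational conjugacy; the paper invokes \cite{CaicedoMargolisdelRio2013} for the Zassenhaus conjecture in cyclic-by-abelian groups, which is what you actually need and what you gesture at parenthetically.)

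The more serious gap is the claim that the kernel of the $b$-action on $A$ is ``forced, matching the abstract kernel in $T$''. Cyclicity of $\aut{A}$ says nothing about \emph{which} power of $b$ first centralizes $A$; this must be tied to $|\zent T|$, and your partial-augmentation remarks do not accomplish that. The paper handles it by a two-case analysis. If $\zent T=1$ and $b^{q^{k-1}}$ centralized the $p$-group $G'$, then a short computation in $G/G'$ shows $b^{q^{k-1}}\in\zent G$, whence $v^{q^{k-1}}\in\zent V$, a contradiction. If $|\zent T|=q^t>1$, the paper conjugates $uz$ (with $z=v^{q^{k-t}}$) to some $x\in G$, decomposes $x=x_px_q$, obtains $x_p\in G'$ via Remark~\ref{ExponentG'}, and then conjugates $b$ inside $G$ so that its $q^{k-t}$-th power equals $x_q$, which commutes with $x_p$; this forces $|\zent{\GEN{x_p,b^g}}|\ge q^t$, and the same centrality trick rules out strict inequality. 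This mechanism for matching $|\zent T|$ is the missing idea in your sketch.
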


\begin{proof}
	By means of contradiction we assume that $G$ is a minimal counterexample to the statement and $T=\GEN{u}\rtimes \GEN{v}$ is as in the hypothesis. Since $T$ is nonabelian, we have that $q$ is a divisor of $p-1$.
	
	We first claim that $G'$ is a $p$-group. Otherwise $G'$ contains a subgroup $N$, normal in $G$, whose order is a prime $r\neq p$. Then, denoting by $\varphi:V\to V(\Z (G/N))$ the natural homomorphism, we get $T\cap \ker\varphi=1$ and hence $\varphi(T)\cong T$. By the minimality of $G$, there are $a\in G'$ and $g\in G$ such that $\GEN{\varphi(a),\varphi(g)}=\GEN{\varphi(a)}\rtimes \GEN{\varphi(g)}\cong T$ and $\varphi(v)$ is conjugate to $\varphi(g)=gN$ in $\Q(G/N)$. On the other hand, by the main theorem of \cite{CaicedoMargolisdelRio2013}, the First Zassenhaus Conjecture holds for $G$; therefore $v$ is conjugate in $\Q G$ to an element $b$ of $G$, so that $bN$ and $gN$ are both conjugate to $\varphi(v)$ in $\Q(G/N)$. Since $gN$ and $bN$ are elements of $G/N$ that are conjugate in $\Q(G/N)$, every irreducible character of $G/N$ takes the same value on $bN$ and $gN$. Hence they are in fact conjugate in $G/N$, and we may assume $g=b$. 
	Let $H=\GEN{N,a,b}$.  Then $H/N\cong T$. 
	If $r\ne q$, then $G$ contains a Hall subgroup of the form $\GEN{a}\rtimes \GEN{b}\cong T$, a contradiction. Otherwise, since $N$ has order $q$ and $p$ is larger than $q$, we have $\GEN{N,a}=N\times \GEN{a}$. Recalling that $b$ is an element of order $q^k$, we have $\GEN{N, b}=N\rtimes\GEN b$ and so $H=(N\times\GEN a)\rtimes\GEN b$; now, as $\GEN{aN,bN}$ is isomorphic to $T$, then so is $\GEN{a}\rtimes \GEN{b}$, against the assumption. This finishes the proof of our claim that $G'$ is a $p$-group.
	 
Next, we consider two different situations, depending on whether $\zent T$ is trivial or not. Assume first $\zent T=1$, which is equivalent to the condition $[u,v^{q^{k-1}}]\ne 1$. As above, we take $b$ in $G$ which is conjugate to $v$ in $\Q G$. If $u^v=u^r$, then $r\not\equiv 1\, \mod\, p$,  therefore $[u,v]=u^{r-1}$ and it has order $p^h$. By Remark~\ref{ExponentG'}, the order of $G'$ is a multiple of $p^h$ and therefore, by hypothesis, $b^{q^{k-1}}$ commutes with $G'$, for otherwise there is $a\in G'$ such that $\GEN{a}\rtimes \GEN{b}\cong T$ and $G$ would not be a counterexample. If $g\in G$, then $g^{b^{q^{k-1}}}=zg$ for some $z\in G'$. Then $g=g^{b^{q^k}}=z^{q}g$ and therefore, $z=1$. This shows that $b^{q^{k-1}}$ is central in $G$ and hence so is $v^{q^{k-1}}$ in $V$, which contradicts the assumption $[u,v^{q^{k-1}}]\ne 1$.

Finally, assume that $\zent T\neq 1$. Since $T$ is nonabelian, we have that $\zent T$ has order $q^t$ for a suitable $0<t<k$; setting $z=v^{q^{k-t}}$, we have $\zent T=\GEN z$. By the fact that the First Zassenhaus Conjecture holds for $G$, we can consider $x\in G$ and $\gamma\in\Q G$ such that $(uz)^\gamma=x$. Note also that the $q$-part $x_q$ of $x$ is $z^\gamma$, and the $p$-part $x_p$  is $u^\gamma$; since $u$ lies in $T'\subseteq V'$, in view of Remark~\ref{ExponentG'}, we then have $x_p\in G'$. Finally, let $b\in G$ and $\delta\in \Q G$ be such that $v^{\delta}=b$. Now, for any $y\in G$, we get $$\varepsilon_y(x_q)=\varepsilon_y(z^\gamma)=\varepsilon_y(z^\delta)=\varepsilon_y(b^{q^{k-t}})$$ (recall that the partial augmentation with respect to a given element $y\in G$ is a class function of $\Q G$). It immediately follows that $b^{q^{k-t}}$ is conjugate to $x_q$ in $G$, and we take $g\in G$ such that ${(b^{q^{k-t}})}^g=x_q$. Clearly the subgroup $H=\langle x_p, b^g\rangle$ has a center of order at least $q^t$, but we claim that $|\zent H|$ is precisely $q^t$ (i.e., $H\cong T$). Set $Z=\oh q{\zent H}$. We will first prove that $|Z|=q^t$, and then that $\zent H=Z$. Indeed, $Z$ centralizes a nontrivial subgroup of the cyclic $p$-group $G'$ and hence it centralizes $G'$; on the other hand, for $P\in\syl p G$, clearly $Z$ acts trivially on $P/G'$ as well, thus $Z$ centralizes $P$. But as $Z$ lies in an (abelian) Hall $p'$-subgroup of $G$, we conclude that $Z$ lies in $\zent G$. If now $|Z|$ is larger that $q^t$, then $b^{q^{k-t-1}}$ would lie in $\zent G$ and hence in $\zent{\Q G}$. But then $v^{q^{k-t-1}}=(b^{q^{k-t-1}})^{\delta^{-1}}=b^{q^{k-t-1}}$ would be central in $V$, contradicting the fact that $v^{q^{k-t-1}}$ does not centralize $u$. Therefore $|Z|=q^t$. As $t<k$ and $b^g$ has order $q^k$, $H$ is not abelian and hence $\zent H=Z$ and $H\cong T$.
\end{proof}

Observe that, if the elements $u$ and $v$ in the above statement generate an abelian subgroup of order $p^hq^k$, then $G$ has a subgroup isomorphic to $\GEN{u,v}$ by \cite{HertweckOrdersSolvable}. This, together with Theorem~\ref{isomorphicfaithful}, completes the proof of Theorem~\ref{SIPmetacyclic} as stated in the introduction.

\bigskip
{\bf Acknowledgment.} This work has been done during a visit of the second author at Dipartimento di Matematica e Informatica (DIMAI) of Universit\` a degli Studi di Firenze, funded by grant MICIU/PRX23/00370. He wishes to thank DIMAI for the hospitality and the Spanish Ministerio de Ciencia, Innovaci\'on y Universidades for the financial support. The authors also wish to thank Leo Margolis for his careful reading and useful comments on this paper.

\enlargethispage{1.5cm}

\end{document}